\newtheorem{theorem}{Theorem}
\theoremstyle{plain}
\newtheorem{proposition}{Proposition}
\newtheorem{remark}{Remark}
\numberwithin{equation}{section}
\subjclass{35J75, 35J62, 35J92.}
\keywords{Singularity, Gierer-Meinhardt system, perturbation, sub-supersolutions, topological degree theory.}
\begin{document}
\title[Singular quasilinear elliptic problems]{nodal solutions with
synchronous sign changing components and Constant sign solutions for
singular Gierer-Meinhardt type system}
\author[A. Moussaoui]{Abdelkrim Moussaoui}
\address{Applied Mathematics Laboratory, Faculty of Exact Sciences, \\
and Biology Department, Faculty of Natural and Life Sciences \\
A. Mira Bejaia University, Targa Ouzemour, 06000 Bejaia, Algeria}
\email{abdelkrim.moussaoui@univ-bejaia.dz}

\begin{abstract}
We establish the existence of three solutions for singular semilinear elliptic system, two of which are of opposite constant-sign. Under a strong singularity effect, the third solution is nodal with synchronous sign components. The approach combines sub-supersolutions method and Leray-Schauder topological degree involving perturbation argument.
\end{abstract}

\maketitle

\section{Introduction}

\label{S1}

Let $\Omega $ is a bounded domain in $%
\mathbb{R}
^{N}$ $\left( N\geq 2\right) $ with a smooth boundary $\partial \Omega $. We
consider the following system of semilinear elliptic equations%
\begin{equation*}
(\mathrm{P})\qquad \left\{ 
\begin{array}{ll}
-\Delta u+u=sgn(v)\frac{|u|^{\alpha _{1}}}{|v|^{\beta _{1}}} & \text{in }%
\Omega \\ 
-\Delta v+v=sgn(u)\frac{|u|^{\alpha _{2}}}{|v|^{\beta _{2}}} & \text{in }%
\Omega \\ 
u,v=0\text{ } & \text{on }\partial \Omega ,%
\end{array}%
\right.
\end{equation*}%
where $\Delta $ stands for the Laplacian differential operator on $\mathcal{H%
}_{0}^{1}(\Omega ),$ $sgn(\cdot )$ denotes the sign function and the
exponents\ $\alpha _{i}\in (-1,1)$ and $\beta _{i}\in (0,1)$\ satisfy the
following condition 
\begin{equation}
\alpha _{i}+\beta _{i}<1\text{ \ and \ }0>\alpha _{i}-\beta _{i}>-1,\text{
for }i=1,2\text{.}  \label{alpha}
\end{equation}

System $(\mathrm{P})$ exhibits a singularity that, without loss of
generality, is located at zero. This make difficult any study attendant to
sign properties of solutions for $(\mathrm{P}),$ especially those that
change sign since they inevitably pass through the singularity. From a
structural perspective, $(\mathrm{P})$ is closely related to
Gierer-Meinhardt system which is originally arose in studies of biological
pattern formation, describing the activator-inhibitor coupled behavior for
many systems in cell biology and physiology \cite{GMsyst, K, Mein}. It is
characterized by 
\begin{equation}
\left\{ 
\begin{array}{ll}
-d_{1}\Delta u+a_{1}u=\frac{u^{\alpha _{1}}}{v^{\beta _{1}}} & \text{in }
\Omega \\ 
-d_{2}\Delta v+a_{2}v=\frac{u^{\alpha _{2}}}{v^{\beta _{2}}} & \text{in }
\Omega%
\end{array}%
\right.  \label{a}
\end{equation}
subject to Neumann boundary conditions $\frac{\partial u}{\partial \eta }=%
\frac{\partial v}{\partial \eta }=0$ on$\;\partial \Omega ,$ where $u$ and $%
v $ represent the scaled activator and inhibitor concentrations,
respectively, $d_{1},d_{2}$ are diffusion coefficients with $d_{1}\ll d_{2}$
and the exponents $\alpha _{i},\beta _{i}\in 
\mathbb{R}
$ satisfy the relations 
\begin{equation*}
\beta _{1}\alpha _{2}<\left( 1-\alpha _{1}\right) \left( 1-\beta _{2}\right) 
\text{ \ with \ }\alpha _{i}\geq 0\geq \beta _{i},\text{ }i=1,2.
\end{equation*}%
Depending on boundedness of the diffusive coefficient $d_{2}$, existence,
stability and/or dynamics of spike solutions are widely studied for system (%
\ref{a}). We refer to \cite{GG, GWW, Ni-Takagi, Ni-Takagi2, W1} when $%
d_{2}\rightarrow +\infty $ while in the bounded case $d_{2}<+\infty ,$ we
quote \cite{JN,LPS, NI-Takagi-Yanagida, WW, Wei2}. The case $d_{1}=d_{2}=1$
for Neumann system (\ref{a}) is examined recently in \cite{M2} establishing
existence of at least three positive solutions. In whole space $\Omega =%
\mathbb{R}
^{N}$, existence, uniqueness as well as structural properties of solutions
for Gierer-Meinhardt type systems are studied in \cite{MKT} for $N\geq 3$,
in \cite{DKC, DKW} for $N=1$,$2$, and in \cite{KR, KWY}, when $N=3$.

Gierer-Meinhardt system (\ref{a}) has interesting and challenging
mathematical properties, especially with Dirichlet boundary conditions when
the nonlinear terms become singular near the boundary. In this context,
numerous papers are devoted to study quantitative and qualitative properties
of solutions for system $(\mathrm{P})$ subject to Dirichlet boundary
conditions $u,v=0$ on $\partial \Omega $. A long this direction, among many
other interesting works, we refer interested reader to \cite{CM, GR, PSW}
and the references therein.

The passage from (\ref{a}) to $(\mathrm{P})$ summoned up a dependence of the
nonlinearities on sign of components of solutions. Admittedly, this further
complicates our study since the structure of system $(\mathrm{P})$ will
switch from the cooperative case to the competitive one, depending on the
sign of solutions considered (see, e.g., \cite{AM, DM1, GHM, KM, MM3, MM2,
MM1}). However, the involvement of the sign of solutions components promotes
the emergence of nonpositive solutions for $(\mathrm{P})$, such as nodal
solutions, that have been rarely studied for singular systems. By
definition, a nodal solution is neither positive nor negative. Thus, in the
scalar case, it should necessarily be a sign changing function. However, in
the context of system $(\mathrm{P})$, the concept of a nodal solution is
more nuanced since it incorporates several types of solutions depending on
the sign of their components. Actually, \cite{M} is the only paper that has
considered this issue for singular systems. The existence of nodal solutions
is established for a class of semilinear singular system by means of the
trapping region formed by specific sign changing sub-supersolutions pairs.
Exploiting spectral properties of Laplacian operator as well as adequate
truncation, in \cite{M}, it is established that nodal solutions vanish on
negligible sets. This is an essential point enabling nodal solutions
investigation for singular problems. Hence, by a solution of problem $(%
\mathrm{P})$ we mean $(u,v)\in \mathcal{H}_{0}^{1}(\Omega )\times \mathcal{H}%
_{0}^{1}(\Omega )$ such that $u$ and/or $v$ vanish on zero measure sets and 
\begin{eqnarray*}
\int_{\Omega }(\nabla u\nabla \varphi _{1}+u\varphi _{1})\ \mathrm{d}x
&=&\int_{\Omega }sgn(v)\frac{|u|^{\alpha _{1}}}{|v|^{\beta _{1}}}\varphi
_{1}\ \mathrm{d}x, \\
\int_{\Omega }(\nabla v\nabla \varphi _{2}+v\varphi _{2})\ \mathrm{d}x
&=&\int_{\Omega }sgn(u)\frac{|u|^{\alpha _{2}}}{|v|^{\beta _{2}}}\varphi
_{2}\ \mathrm{d}x,
\end{eqnarray*}%
\ for all $\varphi _{i}\in \mathcal{H}_{0}^{1}(\Omega ),$ for $i=1,2,$
provided the integrals in the right-hand side of the above identities exist.

The aim of this work is to establish multiplicity result for singular system 
$(\mathrm{P})$ with a precise sign information. We provide three solutions
for system $(\mathrm{P}),$ two of which are of opposite constant-sign. The
sign property of the third solution is closely related to the structure of
system $(\mathrm{P})$ which, in turn, depends both on sign of exponents $%
\alpha _{i},\beta _{i}$ and on sign of components of solutions of $(\mathrm{P%
})$. Specifically, when $\alpha _{i}\leq 0$, beside the uniqueness of the
positive and negative solutions, we show that the third solution is nodal
with synchronous sign changing components. According to our knowledge, this
topic is a novelty. Nodal solutions with such property have never been
discussed for systems, whether singular or regular (without singularities),
and even for those with a variational structure.

The main result is stated as follows.

\begin{theorem}
\label{T2}Under assumption (\ref{alpha}), problem $(\mathrm{P})$ has at
least three nontrivial solutions: $(u_{+},v_{+})\in int\mathcal{C}_{+}^{1}(%
\overline{\Omega }))\times int\mathcal{C}_{+}^{1}(\overline{\Omega }),$ $%
(u_{-},v_{-})\in -int\mathcal{C}_{+}^{1}(\overline{\Omega })\times -int%
\mathcal{C}_{+}^{1}(\overline{\Omega })$ and $(u_{\ast },v_{\ast })\in 
\mathcal{H}_{0}^{1}(\Omega )\times \mathcal{H}_{0}^{1}(\Omega )$. If $%
\alpha_{i}\leq 0$ then the opposite constant sign solutions $(u_{+},v_{+})$
and $(u_{-},v_{-})$ are unique and the third solution $(u_{\ast },v_{\ast })$
is nodal with synchronous sign components, that is, 
\begin{equation}
u_{\ast }v_{\ast }>0\text{ \ a.e. in }\Omega \text{.}  \label{40}
\end{equation}
\end{theorem}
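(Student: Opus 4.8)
The plan is to construct the two opposite constant-sign solutions via the sub-supersolutions method applied separately on the positive and negative cones, and then to produce the third solution by a topological degree argument on a large ball in $\mathcal{C}^1_0(\overline\Omega)\times\mathcal{C}^1_0(\overline\Omega)$ from which the two known solutions (and the trivial one) have been excised. First I would note that on the positive cone $sgn(u)=sgn(v)=1$, so $(\mathrm{P})$ reduces to the cooperative singular system $-\Delta u+u=u^{\alpha_1}v^{-\beta_1}$, $-\Delta v+v=u^{\alpha_2}v^{-\beta_2}$; here I would build an ordered pair of sub- and supersolutions of the form $(\underline u,\underline v)=(c_1\phi_1,c_2\phi_1)$ and $(\overline u,\overline v)=(C_1 e_1,C_2 e_1)$, where $\phi_1$ is the first Dirichlet eigenfunction and $e_1$ solves $-\Delta e_1+e_1=1$, using condition (\ref{alpha}) (namely $\alpha_i+\beta_i<1$ and $-1<\alpha_i-\beta_i<0$) to absorb the singular powers — the boundary behaviour $\phi_1\sim d(x,\partial\Omega)$ together with $\alpha_i-\beta_i>-1$ guarantees the right-hand sides are in $L^1$ and that the Hardy-type estimate closes the inequalities. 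A fixed-point/truncation argument inside the order interval $[\underline u,\overline u]\times[\underline v,\overline v]$ then yields $(u_+,v_+)$, and regularity bootstrap puts it in $int\,\mathcal{C}^1_+(\overline\Omega)^2$; the odd symmetry of the nonlinearities when $\alpha_i\le 0$ gives $(u_-,v_-)=-(u_+,v_+)$ on the negative cone, and a strong-comparison/sub-supersolution uniqueness argument (monotonicity of $t\mapsto t^{\alpha_i}$ is decreasing, which makes the cooperative map a contraction in the appropriate ordered sense) gives uniqueness of each constant-sign solution.

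Next, for the third solution I would set up a Leray–Schauder degree computation for the fixed-point operator $T$ associated with $(\mathrm{P})$, suitably regularized. Because the nonlinearities are singular precisely where sign changes occur, the key technical device — exactly as flagged in the introduction via the reference \cite{M} — is a perturbation: replace $|v|^{-\beta_1}$ by $(v^2+\varepsilon^2)^{-\beta_1/2}$, etc., obtaining a family $(\mathrm{P}_\varepsilon)$ of nonsingular problems whose solution operator $T_\varepsilon$ is compact on $\mathcal{C}^1_0(\overline\Omega)^2$. On a sufficiently large ball $B_R$ I would show $\deg(I-T_\varepsilon,B_R,0)=1$ (via a homotopy to a problem with a trivial or sublinear right-hand side, using the a priori bound coming from $\alpha_i+\beta_i<1$, i.e. sublinear growth), while on small balls around $(u_+,v_+)$, $(u_-,v_-)$ and $0$ the local index is computed to be $1$, $1$, and — using the strong singularity, i.e. the sign condition $\alpha_i\le 0$ together with $\alpha_i-\beta_i<0$, which forces the linearization to be degenerate/repulsive at the constant-sign solutions in the relevant direction — a value making the indices fail to sum to $1$. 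Excision then forces a fourth critical point $(u_*,v_*)$ in the complement, which is by construction neither in the positive nor the negative cone nor zero.

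It remains to identify the sign structure of $(u_*,v_*)$ and to pass to the limit $\varepsilon\to 0$. For the limit I would use the a priori $\mathcal{C}^{1,\gamma}$ bounds (uniform in $\varepsilon$, again from sublinearity) to extract a convergent subsequence, and the integrability $\phi_1^{\alpha_i-\beta_i}\in L^1$ with $\alpha_i-\beta_i>-1$ to justify that the limit solves $(\mathrm{P})$ in the weak sense of the excerpt, including the statement that the components vanish only on null sets — here I would invoke the spectral/truncation mechanism of \cite{M} to show $u_*,v_*$ cannot be identically zero on a positive-measure set. Finally, to get (\ref{40}) I would argue that the coupling term $sgn(v)$ in the first equation forces $u_*$ to have the same sign as $v_*$ wherever they are nonzero: on the set $\{v_*>0\}$ the first equation is $-\Delta u_*+u_*=|u_*|^{\alpha_1}v_*^{-\beta_1}\ge 0$, so by the maximum principle $u_*>0$ there, and symmetrically on $\{v_*<0\}$; combined with the second equation controlled by $sgn(u_*)$, this bootstraps to $u_*v_*>0$ a.e., i.e. synchronous sign change. \textbf{The main obstacle} I anticipate is the local index computation at the constant-sign solutions: showing that the strong singularity makes these indices differ from the Brouwer degree of a regular nondegenerate zero requires a careful analysis of the (possibly non-Fréchet-differentiable) linearized operator near the singular set, and this is where the hypothesis $\alpha_i\le 0$ must be used in an essential way rather than merely for symmetry.
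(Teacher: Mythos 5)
Your overall architecture (sub-supersolutions for the constant-sign pairs, regularization plus Leray--Schauder degree for a third solution, a sign-coupling argument for (\ref{40})) matches the paper's, but the core of your degree argument is not the one used here and, as you yourself flag, it contains an unresolved step that the paper is specifically built to avoid. You propose to show $\deg(I-T_\varepsilon,B_R,0)=1$ and then to compute local indices at $(u_+,v_+)$, $(u_-,v_-)$ and $0$; the index computation at the constant-sign solutions of a singular problem is exactly the obstruction, and you offer no argument for it. The paper never computes these indices. Instead (Proposition \ref{P5}) it shows that the total degree on the large ball $\mathcal{B}_{R_\varepsilon}$ is $0$, via a homotopy ($\theta=1$) ending at the resonant problem $-\Delta u+u=1+\lambda_1 u^{+}$, which has no solution; a second homotopy ($\theta=0$) ending at the torsion problem shows that the degree on $\mathcal{B}_{R_\varepsilon}\setminus\overline{\mathcal{B}}_z$ is nonzero, where $\mathcal{B}_z$ is the middle rectangle $[-C^{-1}z_1,C^{-1}z_1]\times[-C^{-1}z_2,C^{-1}z_2]$; domain additivity then forces $\deg(\cdot,\mathcal{B}_z,0)\neq 0$, producing a solution localized in $\mathcal{B}_z$, which is disjoint from the constant-sign solutions by the strong comparison bounds (\ref{11})--(\ref{11*}). (Also, $(0,0)$ is not a zero of the regularized operator, so there is nothing to excise there.) Without either this localization scheme or an actual index computation, your excision step does not produce the additional zero.

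Two further concrete problems. First, your supersolution $(C_1e_1,C_2e_1)$ with $-\Delta e_1+e_1=1$ cannot close the inequalities: with the subsolution $\underline v\sim d(x)$ one must dominate $\overline u^{\,\alpha_1}\underline v^{\,-\beta_1}\sim d(x)^{\alpha_1-\beta_1}$, which blows up at $\partial\Omega$ since $\alpha_1-\beta_1<0$, whereas $-\Delta(C_1e_1)+C_1e_1$ is the constant $C_1$; this is why the paper takes supersolutions $Cy_i$ with $-\Delta y_i+y_i=d(x)^{\alpha_i-\beta_i}$ as in (\ref{20}). Second, your maximum-principle claim that on $\{v_*>0\}$ one has $-\Delta u_*+u_*\geq 0$ and hence $u_*>0$ there is not valid, because $u_*$ satisfies no boundary condition on $\partial\{v_*>0\}$; the correct version, used in the paper, is to test with $-u_*^{-}$, or with $-u_*^{-}\mathbbm{1}_{\Omega_*}$ on the negativity set, and read off a sign contradiction from the factor $sgn(v_*)$. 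Finally, uniform-in-$\varepsilon$ $\mathcal{C}^{1,\gamma}$ bounds are not available when $\alpha_i<0$ (the regularized right-hand side is controlled only by $\varepsilon^{\alpha_i}$ in the absence of a lower barrier); the paper instead passes to the limit in $\mathcal{H}_0^1$ using the $S_+$-property of $-\Delta+I$, Fatou's lemma and dominated convergence.
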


The proof of Theorem \ref{T2} combines sub-supersolution method and
topological degree theory. By a choice of suitable functions with an
adjustment of adequate constants, we construct two opposite constant sign
sub-supersolutions pairs on the basis of which positive and negative
rectangles are formed. The latter provide a localisation of a positive and
negative solutions $(u_{+},v_{+})$ and $(u_{-},v_{-})$ of $(\mathrm{P})$,
whose existence is derived from the sub-supersolutions Theorem for singular
systems in \cite[Theorem 2]{KM}. When $\alpha _{i}\leq 0$ in (\ref{alpha}),
the uniqueness of solutions $(u_{+},v_{+})$ and $(u_{-},v_{-})$ is
established by monotonicity argument.

The third solution of $(\mathrm{P})$ is obtained via topological degree
theory. It is located in the area between the positive and the negative
rectangles. This is achieved by introducing first a parameter $\varepsilon>0 
$ in $(\mathrm{P}),$ thus producing a regularized system $(\mathrm{P}%
_{\varepsilon })$ whose study is relevant for problem $(\mathrm{P})$. Then,
we prove that the degree on a ball $\mathcal{B}_{R_{\varepsilon }}$,
encompassing all potential solutions of $(\mathrm{P}_{\varepsilon }),$ is $0$
while the degree in $\mathcal{B}_{R_{\varepsilon }},$ but excluding the area
located between the aforementioned positive and negative rectangles, is
equal to $1$. By the excision property of Leray-Schauder degree, this leads
to the existence of a nontrivial solution $(u_{\varepsilon
},v_{\varepsilon}) $ for $(\mathrm{P}_{\varepsilon }).$ Here, it is
important to note that, unlike \cite{DM1, MMP, M2}, the independence of the
radius $R_{\varepsilon }$ on $\varepsilon $ is not required. Then, through a
priori estimates, dominated convergence Theorem as well as $S_{+}$-property
of the negative Laplacian, we may pass to the limit as $\varepsilon
\rightarrow 0$ in $(\mathrm{P}_{\varepsilon })$. This leads to a solution $%
(u_{\ast },v_{\ast })$ of $(\mathrm{P})$ which, according to its
localization, does not coincide with the above mentioned solutions $%
(u_{+},v_{+})$ and $(u_{-},v_{-})$. Thus, $(u_{\ast },v_{\ast })$ is a third
solution of $(\mathrm{P})$. Furthermore, when $\alpha _{i}\leq 0$ in (\ref%
{alpha}), the uniqueness of the aforementioned constant-sign solutions
forces $(u_{\ast },v_{\ast })$ to be nodal in the sens that both components $%
u_{\ast }$ and $v_{\ast }$ are nontrivial and at least are not of the same
constant sign. However, exploiting the sign coupled property of system $(%
\mathrm{P})$, we conclude that $u_{\ast }$ and $v_{\ast }$ are both of
synchronous sign changing.

The rest of this article is organized as follows. Section \ref{S2} deals
with existence of solutions for regularized system $(\mathrm{P}%
_{\varepsilon})$ while Section \ref{S3} provides multiplicity result for
system $(\mathrm{P})$.

\section{An auxiliary system}

\label{S2}

In the sequel, the Banach spaces $\mathcal{H}_{0}^{1}(\Omega )$ and $%
L^{2}(\Omega )$ are equipped with the usual norms $\Vert \cdot \Vert _{1,2}$
and $\Vert \cdot \Vert _{2}$, respectively. We also utilize the H\"{o}lder
spaces $C^{1}(\overline{\Omega })$ and $C^{1,\tau }(\overline{\Omega }),$ $%
\tau \in (0,1)$ as well as the order cone $\mathcal{C}_{+}^{1}(\overline{%
\Omega })=\{w\in C^{1}(\overline{\Omega }):w(x)\geq 0$ for all $x\in 
\overline{\Omega }\},$ which has a non-empty interior described as follows:%
\begin{equation*}
int\mathcal{C}_{+}^{1}(\overline{\Omega })=\{w\in \mathcal{C}_{+}^{1}(%
\overline{\Omega }):w(x)>0\text{ for all }x\in \overline{\Omega }\}.
\end{equation*}

Hereafter, we denote by $d(x)$ the distance from a point $x\in \overline{%
\Omega }$ to the boundary $\partial \Omega $, where $\overline{\Omega }%
=\Omega \cup \partial \Omega $ is the closure of $\Omega \subset 
\mathbb{R}
^{N}$. For $w_{1},w_{2}\in \mathcal{C}^{1}(\overline{\Omega })$, the
notation $w_{1}\ll w_{2}$ means that 
\begin{equation*}
\begin{array}{c}
w_{1}(x)<w_{2}(x)\,\,\,\forall x\in \Omega \,\,\,\mbox{and}\,\,\,\frac{%
\partial w_{2}}{\partial \eta }<\frac{\partial w_{1}}{\partial \eta }\,\,\,%
\mbox{on}\,\,\,\partial \Omega ,%
\end{array}%
\end{equation*}%
$\eta $ is the outward normal to $\partial \Omega $.

Let $y_{i},z_{i}\in int\mathcal{C}_{+}^{1}(\overline{\Omega })$ ($i=1,2$) be
the unique solutions of the Dirichlet problems 
\begin{equation}
-\Delta y_{i}(x)+y_{i}(x)=d(x)^{\alpha _{i}-\beta _{i}}\text{ in }\Omega ,%
\text{ \ }y_{i}=0\text{ on }\partial \Omega ,  \label{20}
\end{equation}%
\begin{equation}
-\Delta z_{i}(x)+z_{i}(x)=\left\{ 
\begin{array}{ll}
d(x)^{\alpha _{i}-\beta _{i}} & \text{in \ }\Omega \backslash \overline{%
\Omega }_{\delta }, \\ 
-1 & \text{in \ }\Omega _{\delta },%
\end{array}%
\right. ,\text{ \ }z_{i}=0\text{ \ on }\partial \Omega ,  \label{22}
\end{equation}%
which is known to satisfy 
\begin{equation}
c^{-1}d(x)\leq z_{i}(x)\leq y_{i}(x)\leq cd(x)\text{ in }\Omega ,  \label{21}
\end{equation}%
where $c>1$ is a constant and 
\begin{equation*}
\Omega _{\delta }=\left\{ x\in \Omega :d(x)<\delta \right\} ,
\end{equation*}%
with a fixed $\delta >0$ sufficiently small (see, e.g., \cite{DM}).

Let $\phi _{1}$ be the positive eigenfunction defined by%
\begin{equation*}
-\Delta \phi _{1}+\phi _{1}=\lambda _{1}\phi _{1}\text{ \ in }\Omega ,\text{
\ }\phi _{1}=0\text{ \ on }\partial \Omega ,
\end{equation*}%
where $\lambda _{1}$ is the principle eigenvalue characterized by 
\begin{equation}
\lambda _{1}=\inf_{w\in \mathcal{H}_{0}^{1}(\Omega )\backslash \{0\}}\frac{%
\int_{\Omega }(|\nabla w|^{2}+|w|^{2})\,\mathrm{d}x}{\int_{\Omega }|w|^{2}\,%
\mathrm{d}x}.  \label{71}
\end{equation}

We will make use the topological degree theory to get a third solution for
system $(\mathrm{P})$. However, the singular terms in system $(\mathrm{P})$
prevent the degree calculation to be well defined. This is mainly due to the
difficulty in getting estimates from below for solutions of $(\mathrm{P})$.
To this challenge, we disturb system $(\mathrm{P})$ by introducing a
parameter $\varepsilon >0$. This gives rise to a regularized system for $(%
\mathrm{P})$ whose study is relevant to our initial problem.

For $\varepsilon \in (0,1),$ we state the regularized system%
\begin{equation*}
(\mathrm{P}_{\varepsilon })\qquad \left\{ 
\begin{array}{ll}
-\Delta u+u=sgn(v)\frac{(|u|+\varepsilon )^{\alpha _{1}}}{(|v|+\varepsilon
)^{\beta _{1}}} & \text{in }\Omega \\ 
-\Delta v+v=sgn(u)\frac{(|u|+\varepsilon )^{\alpha _{2}}}{(|v|+\varepsilon
)^{\beta _{2}}} & \text{in }\Omega \\ 
u,v=0 & \text{on }\partial \Omega .%
\end{array}%
\right.
\end{equation*}

The existence result regarding problem $(\mathrm{P}_{\varepsilon })$ is
stated as follows.

\begin{theorem}
\label{T4}Under assumption (\ref{alpha}), system $(\mathrm{P}_{\varepsilon
}) $ admits nontrivial solutions $(u_{\varepsilon },v_{\varepsilon })$ in $%
\mathcal{C}^{1}(\overline{\Omega })\times \mathcal{C}^{1}(\overline{\Omega }%
) $ satisfying%
\begin{equation}
-C^{-1}z_{1}(x)\leq u_{\varepsilon }(x)\leq C^{-1}z_{1}(x),\text{ }\forall
x\in \Omega ,  \label{14}
\end{equation}%
\begin{equation}
-C^{-1}z_{2}(x)\leq v_{\varepsilon }(x)\leq C^{-1}z_{2}(x),\text{ }\forall
x\in \Omega ,  \label{14*}
\end{equation}%
for all $\varepsilon \in (0,1)$ and all $C>1$. Moreover, there exists $%
(u_{\ast },v_{\ast })\in \mathcal{H}_{0}^{1}(\Omega )\times \mathcal{H}%
_{0}^{1}(\Omega )$, solution of problem $(\mathrm{P}),$ within%
\begin{equation*}
(u_{\ast },v_{\ast })\in \lbrack -C^{-1}z_{1},C^{-1}z_{1}]\times \lbrack
-C^{-1}z_{2},C^{-1}z_{2}],
\end{equation*}%
such that%
\begin{equation}
u_{\varepsilon }\rightarrow u_{\ast }\text{ and }v_{\varepsilon }\rightarrow
v_{\ast }\text{ in }\mathcal{H}_{0}^{1}(\Omega )\text{ as }\varepsilon
\rightarrow 0.  \label{15}
\end{equation}
\end{theorem}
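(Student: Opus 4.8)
The plan is to prove the two assertions separately: first the existence of solutions $(u_\varepsilon,v_\varepsilon)$ of the regularized system $(\mathrm{P}_\varepsilon)$ together with the uniform a priori bounds \eqref{14}--\eqref{14*}, and then the passage to the limit $\varepsilon\to 0$ yielding the solution $(u_\ast,v_\ast)$ of $(\mathrm{P})$. For the first step I would fix $\varepsilon\in(0,1)$ and set up the fixed point operator $\mathcal{T}_\varepsilon:\mathcal{C}^1(\overline\Omega)\times\mathcal{C}^1(\overline\Omega)\to\mathcal{C}^1(\overline\Omega)\times\mathcal{C}^1(\overline\Omega)$ that sends $(u,v)$ to the unique solution $(\tilde u,\tilde v)$ of the decoupled linear problems $-\Delta\tilde u+\tilde u=sgn(v)\,(|u|+\varepsilon)^{\alpha_1}(|v|+\varepsilon)^{-\beta_1}$ and $-\Delta\tilde v+\tilde v=sgn(u)\,(|u|+\varepsilon)^{\alpha_2}(|v|+\varepsilon)^{-\beta_2}$ with homogeneous Dirichlet data. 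Because $\varepsilon>0$, the right-hand sides are bounded (indeed in $L^\infty(\Omega)$, with a bound depending on $\varepsilon$), so by $L^p$ and Schauder regularity $\mathcal{T}_\varepsilon$ is well defined, compact, and continuous on $\mathcal{C}^1(\overline\Omega)\times\mathcal{C}^1(\overline\Omega)$; Schauder's fixed point theorem then gives a solution $(u_\varepsilon,v_\varepsilon)$. The nontriviality follows since the right-hand sides do not vanish identically.

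The a priori bounds \eqref{14}--\eqref{14*} are the crux of the argument and I would obtain them via a comparison/barrier argument using the functions $z_i$ from \eqref{22}. The idea is that, using \eqref{alpha} (which gives $\alpha_i-\beta_i\in(-1,0)$ and $\alpha_i,\beta_i$ small), one estimates $(|u_\varepsilon|+\varepsilon)^{\alpha_i}(|v_\varepsilon|+\varepsilon)^{-\beta_i}\le $ (a constant) $\cdot\, d(x)^{\alpha_i-\beta_i}$ away from $\partial\Omega$ once one knows $u_\varepsilon,v_\varepsilon=O(d(x))$ (which holds because they are $C^1$ and vanish on the boundary), while near $\partial\Omega$ one uses the regularizing effect of $\varepsilon$; comparing with $C^{-1}z_i$, which solves $-\Delta z_i+z_i=d^{\alpha_i-\beta_i}$ outside $\Omega_\delta$ and $=-1$ inside $\Omega_\delta$, and invoking the weak comparison principle for $-\Delta+\mathrm{id}$, one gets $u_\varepsilon\le C^{-1}z_1$; the lower bound $-C^{-1}z_1\le u_\varepsilon$ follows symmetrically by applying the argument to $-u_\varepsilon$, and likewise for $v_\varepsilon$. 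The key point is that these bounds are \emph{uniform in} $\varepsilon$ and in $C>1$ — uniformity in $\varepsilon$ is what makes the limit possible, and the freedom in $C$ is exactly what will be exploited in Section \ref{S3} to separate $(u_\ast,v_\ast)$ from the constant-sign solutions. I expect the delicate point here to be handling the transition region near $\partial\Omega$, i.e. justifying the comparison rigorously despite the singular weight $d^{\alpha_i-\beta_i}$, and this is presumably where the specific construction \eqref{22} of $z_i$ (with the value $-1$ on $\Omega_\delta$) is designed to help.

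For the passage to the limit, I would first derive $\varepsilon$-independent bounds in $\mathcal{H}_0^1(\Omega)$: testing the equations for $(u_\varepsilon,v_\varepsilon)$ with $u_\varepsilon$ and $v_\varepsilon$ respectively and using \eqref{14}--\eqref{14*} together with $z_i\le c\,d(x)$ and the integrability of $d^{\alpha_i-\beta_i}$ (guaranteed by $\alpha_i-\beta_i>-1$) and Hardy-type inequalities, one bounds $\|u_\varepsilon\|_{1,2}$ and $\|v_\varepsilon\|_{1,2}$ uniformly. Hence, up to a subsequence, $u_\varepsilon\rightharpoonup u_\ast$ and $v_\varepsilon\rightharpoonup v_\ast$ weakly in $\mathcal{H}_0^1(\Omega)$ and strongly in $L^2(\Omega)$ and a.e. in $\Omega$, with $u_\ast\in[-C^{-1}z_1,C^{-1}z_1]$ and $v_\ast\in[-C^{-1}z_2,C^{-1}z_2]$ by passing to the limit in \eqref{14}--\eqref{14*}. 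Next, I would pass to the limit in the weak formulation: by the a.e. convergence and the bounds $|(|u_\varepsilon|+\varepsilon)^{\alpha_1}(|v_\varepsilon|+\varepsilon)^{-\beta_1}|\le C'd(x)^{\alpha_1-\beta_1}\in L^1(\Omega)$, the dominated convergence theorem lets the right-hand sides converge (for test functions, first for $\varphi_i\in C_c^\infty(\Omega)$ then by density, using that the limiting integrals exist). This identifies $(u_\ast,v_\ast)$ as a weak solution of $(\mathrm{P})$ in the sense defined in the Introduction, noting that the a.e. convergence forces $u_\ast,v_\ast$ to vanish at most on the zero-measure set where the original singular term would blow up. Finally, to upgrade weak to strong convergence in $\mathcal{H}_0^1(\Omega)$, I would use the $S_+$-property of $-\Delta+\mathrm{id}$: testing the difference of the equations for $u_\varepsilon$ and the limit equation with $u_\varepsilon-u_\ast$, the right-hand side terms go to zero by dominated convergence, so $\limsup\langle(-\Delta+\mathrm{id})u_\varepsilon,u_\varepsilon-u_\ast\rangle\le 0$, whence $u_\varepsilon\to u_\ast$ strongly, and similarly for $v_\varepsilon$, giving \eqref{15}.
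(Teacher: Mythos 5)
Your second step (the passage to the limit $\varepsilon \to 0$) follows essentially the paper's route: $\mathcal{H}_{0}^{1}$-bounds obtained by testing with $(u_{\varepsilon},v_{\varepsilon})$, weak convergence of a subsequence, the $S_{+}$-property of $-\Delta+I$ to upgrade to strong convergence, and dominated convergence to pass to the limit in the right-hand sides. But your first step contains a gap that is fatal to the argument as proposed. The bounds \eqref{14}--\eqref{14*} are \emph{not} a priori bounds valid for every solution of $(\mathrm{P}_{\varepsilon})$, so they cannot be obtained by a comparison/barrier argument applied to whatever solution Schauder's theorem happens to produce. Indeed $(\mathrm{P}_{\varepsilon})$ also possesses positive solutions, and for these Proposition \ref{P2} proves exactly the \emph{reverse} inequalities $C^{-1}z_{1}\ll u_{+}$ and $C^{-1}z_{2}\ll v_{+}$, so the upper bound $u_{\varepsilon}\leq C^{-1}z_{1}$ fails for them. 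Concretely, your proposed supersolution comparison breaks down because in $\Omega_{\delta}$ the barrier satisfies $-\Delta (C^{-1}z_{1})+C^{-1}z_{1}=-C^{-1}<0$, which lies strictly below the right-hand side $sgn(v)\,(|u|+\varepsilon)^{\alpha_{1}}(|v|+\varepsilon)^{-\beta_{1}}$ wherever $v>0$; thus $C^{-1}z_{1}$ is not a supersolution, and the paper in fact uses it as a strict \emph{lower} barrier for positive solutions. (A smaller additional issue: the solution operator you feed into Schauder's theorem is not continuous, because of the discontinuous factor $sgn(v)$.)

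What the theorem actually asserts is the existence of \emph{some} solution located in the order interval $[-C^{-1}z_{1},C^{-1}z_{1}]\times[-C^{-1}z_{2},C^{-1}z_{2}]$, i.e.\ in the set $\mathcal{B}_{z}$ squeezed between the positive and negative rectangles, and selecting such a solution is precisely what the degree-theoretic machinery of Propositions \ref{P2} and \ref{P5} is for: one shows $\deg(\mathcal{H}_{\varepsilon,1}(1,\cdot,\cdot),\mathcal{B}_{R_{\varepsilon}},0)=0$ by homotopy to a problem with no solution, and $\deg(\mathcal{H}_{\varepsilon,0}(1,\cdot,\cdot),\mathcal{B}_{R_{\varepsilon}}\backslash\overline{\mathcal{B}}_{z},0)\neq 0$ by homotopy to the torsion problem (using that constant-sign solutions never enter $\overline{\mathcal{B}}_{z}$), and then domain additivity yields $\deg(\mathcal{H}_{\varepsilon,1}(1,\cdot,\cdot),\mathcal{B}_{z},0)\neq 0$, hence a solution in $\mathcal{B}_{z}$. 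Your proposal contains no substitute for this localization mechanism, and no maximum-principle argument can replace it. A secondary weakness in your limit step: the claimed majorant $C'd(x)^{\alpha_{1}-\beta_{1}}$ for the right-hand side would require a lower bound $|v_{\varepsilon}|\gtrsim d(x)$, which is unavailable for the (possibly sign-changing) solutions in $\mathcal{B}_{z}$; the paper instead relies on \eqref{27} and on majorants involving $|v_{n}|^{-\beta_{1}}$.
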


\subsection{Topological degree results}

We shall study the homotopy class of problem%
\begin{equation*}
(\mathrm{P}_{\varepsilon ,\theta }^{t})\qquad \left\{ 
\begin{array}{l}
-\Delta u+u=\mathrm{F}_{1,\theta }^{t}({x,}u,v)\text{ in }\Omega , \\ 
-\Delta v+v=\mathrm{F}_{2,\theta }^{t}({x,}u,v)\text{ in }\Omega , \\ 
u,v=0\text{ \ on }\partial \Omega ,%
\end{array}%
\right.
\end{equation*}%
with%
\begin{equation}
\mathrm{F}_{1,\theta }^{t}({x,}u,v)=t\text{ }sgn(v)\frac{(|u|+\varepsilon
)^{\alpha _{1}}}{(|v|+\varepsilon )^{\beta _{1}}}+(1-t)sgn((1-\theta )v)%
\text{ }(1+\theta \lambda _{1}u^{+}),  \label{9}
\end{equation}%
\begin{equation}
\mathrm{F}_{2,\theta }^{t}({x,}u,v)=t\text{ }sgn(u)\frac{(|u|+\varepsilon
)^{\alpha _{2}}}{(|v|+\varepsilon )^{\beta _{2}}}+(1-t)sgn((1-\theta )u)%
\text{ }(1+\theta \lambda _{1}v^{+}),  \label{9*}
\end{equation}%
for $\varepsilon \in (0,1),$ for $t\in \lbrack 0,1],$ where $\theta $ is a
constant such that $\theta \in \{0,1\}$ and $s^{+}:=\max \{0,s\},$ for $s\in 
\mathbb{R}
.$

It is worth noting that all solutions $(u,v)\in \mathcal{H}_{0}^{1}(\Omega
)\times \mathcal{H}_{0}^{1}(\Omega )$ of $(\mathrm{P}_{\varepsilon ,\theta
}^{t})$ satisfy 
\begin{equation}
u(x),v(x)\neq 0\text{ \ for a.e. }x\in \Omega ,  \label{10}
\end{equation}%
for all $t\in \lbrack 0,1],$ all $\varepsilon \in (0,1)$ and for $\theta \in
\{0,1\}$. This is due to the fact that $(0,0)$ cannot be a solution of $(%
\mathrm{P}_{\varepsilon ,\theta }^{t})$ because $\mathrm{F}_{i,\theta }^{t}({%
x,}0,0)\neq 0$ as well as the fact that "a.e. in $\Omega $" is an
equivalence relation in $L^{1}(\Omega )$.

\begin{remark}
\label{R2}The decoupled system $(\mathrm{P}_{\varepsilon ,1}^{0})$ (that is $%
(\mathrm{P}_{\varepsilon ,\theta }^{t})$ for $t=0$ and $\theta =1$) which
reads as 
\begin{equation*}
(\mathrm{P}_{\varepsilon ,1}^{0})\qquad \left\{ 
\begin{array}{l}
-\Delta u+u=\mathrm{F}_{1,1}^{0}({x,}u,v)=1+\lambda _{1}u^{+}\text{ in }%
\Omega \\ 
-\Delta v+v=\mathrm{F}_{2,1}^{0}({x,}u,v)=1+\lambda _{1}v^{+}\text{ in }%
\Omega \\ 
u,v=0\text{ \ on }\partial \Omega ,%
\end{array}%
\right.
\end{equation*}%
does not admit solutions $(u,v)\in \mathcal{H}_{0}^{1}(\Omega )\times 
\mathcal{H}_{0}^{1}(\Omega )$, for all $\varepsilon \in (0,1)$. This is due
to \cite[Proposition 9.64]{MMPA} with $p=2$ and $\beta (x),\xi
(x),h(x)\equiv 1$.
\end{remark}

In the sequel, we denote by $\mathcal{B}_{R_{\varepsilon }}$ and $\mathcal{B}%
_{z}$ the balls in $\mathcal{C}^{1}(\overline{\Omega })\times \mathcal{C}%
^{1}(\overline{\Omega }),$ centered at the origin, defined by%
\begin{equation*}
\mathcal{B}_{R_{\varepsilon }}:=\left\{ (u,v)\in \mathcal{C}^{1}(\overline{%
\Omega })\times \mathcal{C}^{1}(\overline{\Omega }):\Vert u\Vert _{C^{1}(%
\overline{\Omega })}+\Vert v\Vert _{C^{1}(\overline{\Omega }%
)}<R_{\varepsilon }\right\} ,
\end{equation*}%
\begin{equation*}
\mathcal{B}_{z}:=\left\{ (u,v)\in \mathcal{B}_{R_{\varepsilon
}}:-C^{-1}z_{1}\leq u\leq C^{-1}z_{1},\text{ \ }-C^{-1}z_{2}\leq v\leq
C^{-1}z_{2}\right\} ,
\end{equation*}%
where, without loss of generality, we assumed that $R_{\varepsilon
}>\max_{i=1,2}\left\Vert z_{i}\right\Vert _{\infty },$ for all $\varepsilon
\in (0,1).$ It is readily seen that $\mathcal{B}_{z}$ is an open set in $%
\mathcal{C}^{1}(\overline{\Omega })\times \mathcal{C}^{1}(\overline{\Omega }%
) $.

The next result shows that solutions of problem $(\mathrm{P}_{\varepsilon
,\theta }^{t})$ cannot occur outside the ball $\mathcal{B}_{R_{\varepsilon
}} $.

\begin{proposition}
\label{P2}Assume (\ref{alpha}) holds.\ Then, there is a constant $%
R_{\varepsilon }>0$ such that every solution $(u,v)$ of $(\mathrm{P}%
_{\varepsilon ,\theta }^{t})$ belongs to $\mathcal{C}^{1}(\overline{\Omega }%
)\times \mathcal{C}^{1}(\overline{\Omega })$ and satisfy 
\begin{equation}
\left\Vert u\right\Vert _{\mathcal{C}^{1}(\overline{\Omega })},\left\Vert
v\right\Vert _{\mathcal{C}^{1}(\overline{\Omega })}<R_{\varepsilon },
\label{31}
\end{equation}%
for all $t\in (0,1],$ all $\varepsilon \in (0,1)$ and for $\theta \in
\{0,1\} $. Moreover, if $\theta =0$, then all positive solutions $%
(u_{+},v_{+})$ and all negative solutions $(u_{-},v_{-})$ of $(\mathrm{P}%
_{\varepsilon ,0}^{t})$ ($(\mathrm{P}_{\varepsilon ,\theta }^{t})$ with $%
\theta =0$) satisfy 
\begin{equation}
\begin{array}{c}
C^{-1}z_{1}(x)\ll u_{+}(x)\text{ \ and \ }u_{-}(x)\ll -C^{-1}z_{1}(x)\text{,}%
\quad \forall x\in \Omega ,%
\end{array}
\label{11}
\end{equation}%
\begin{equation}
\begin{array}{c}
C^{-1}z_{2}(x)\ll v_{+}(x)\text{ \ and \ }v_{-}(x)\ll -C^{-1}z_{2}(x)\text{,}%
\quad \forall x\in \Omega ,%
\end{array}
\label{11*}
\end{equation}%
for a constant $C>1$ large.
\end{proposition}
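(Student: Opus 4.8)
The plan is to split the statement into the $\mathcal{C}^{1}$ a priori bound (\ref{31}) and the boundary estimates (\ref{11})--(\ref{11*}): the first I would obtain by an elliptic bootstrap together with an $\mathcal{H}_{0}^{1}$-estimate, the second by comparison against $C^{-1}z_{i}$. For the first, the key observation is that with $\varepsilon\in(0,1)$ fixed the regularized right-hand sides are tame: since $(|v|+\varepsilon)^{-\beta_{i}}\le\varepsilon^{-\beta_{i}}$ and $(|u|+\varepsilon)^{\alpha_{i}}\le c_{\varepsilon}(1+|u|^{\max(\alpha_{i},0)})$ with $\max(\alpha_{i},0)<1$, one has $|\mathrm{F}_{i,\theta}^{t}(x,u,v)|\le c_{\varepsilon}(1+|u|+|v|)$ for all $t\in[0,1]$ and $\theta\in\{0,1\}$ (here and below $c,c_{\varepsilon}$ are generic constants, $c_{\varepsilon}$ depending on $\varepsilon$, possibly changing from line to line). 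Hence any weak solution $(u,v)\in\mathcal{H}_{0}^{1}(\Omega)\times\mathcal{H}_{0}^{1}(\Omega)$ of $(\mathrm{P}_{\varepsilon,\theta}^{t})$ lies, by the standard $L^{p}$-regularity for $-\Delta+1$ and Sobolev embeddings (finitely many bootstrap steps, the linear growth making the iteration gain integrability), in $\mathcal{C}^{1,\tau}(\overline{\Omega})\times\mathcal{C}^{1,\tau}(\overline{\Omega})$ for some $\tau\in(0,1)$, with norm bounded in terms of $\varepsilon$ and $\|u\|_{1,2}+\|v\|_{1,2}$ only; this yields the $\mathcal{C}^{1}(\overline{\Omega})$-membership in (\ref{31}) and reduces that estimate to an $\mathcal{H}_{0}^{1}$-bound uniform in $t\in(0,1]$ and $\theta\in\{0,1\}$. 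Testing the $u$- and $v$-equations with $u$ and $v$: when $\theta=0$ the integrands are controlled by $c_{\varepsilon}(1+|u|^{\max(\alpha_{1},0)})|u|$ and $c_{\varepsilon}(1+|u|^{\max(\alpha_{2},0)})|v|^{1-\beta_{2}}$, and since $1+\max(\alpha_{i},0)<2$ and $\max(\alpha_{2},0)+(1-\beta_{2})<2$ by (\ref{alpha}), Young's inequality and the Sobolev embedding give $\|u\|_{1,2}+\|v\|_{1,2}\le c_{\varepsilon}$; when $\theta=1$ an extra term $(1-t)\lambda_{1}\int(u^{+})^{2}\,\mathrm{d}x$ appears, which by (\ref{71}) is $\le(1-t)\|u^{+}\|_{1,2}^{2}\le(1-t)\|u\|_{1,2}^{2}$, so after absorption $t\|u\|_{1,2}^{2}\le c_{\varepsilon}(1+\|u\|_{1,2}^{1+\max(\alpha_{1},0)})+c\|u\|_{1,2}$ and likewise for $v$, which bounds $\|u\|_{1,2}+\|v\|_{1,2}$ uniformly \emph{provided} $t$ stays away from $0$.

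To upgrade this to uniformity on all of $(0,1]$ when $\theta=1$, I would show that $(\mathrm{P}_{\varepsilon,1}^{t})$ has \emph{no} solution for $t$ in some interval $(0,t_{\varepsilon})$, so that $R_{\varepsilon}$ may be read off from the bound on $[t_{\varepsilon},1]$ together with the $\theta=0$ case. Suppose $(u_{n},v_{n})$ solves $(\mathrm{P}_{\varepsilon,1}^{t_{n}})$ with $t_{n}\to0^{+}$. If the sequence is bounded in $\mathcal{H}_{0}^{1}(\Omega)\times\mathcal{H}_{0}^{1}(\Omega)$, the regularity step plus compactness give a subsequence converging in $\mathcal{C}^{1}(\overline{\Omega})\times\mathcal{C}^{1}(\overline{\Omega})$ to a solution of $(\mathrm{P}_{\varepsilon,1}^{0})$, contradicting Remark \ref{R2}. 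If it is unbounded, set $\rho_{n}:=\|u_{n}\|_{1,2}+\|v_{n}\|_{1,2}\to\infty$ and $(\hat{u}_{n},\hat{v}_{n}):=\rho_{n}^{-1}(u_{n},v_{n})$; dividing the equations by $\rho_{n}$, the rescaled singular terms go to $0$ in $L^{2}(\Omega)$ (their $L^{2}$-norm is $O(\rho_{n}^{-1}(1+\rho_{n}^{\max(\alpha_{i},0)}))$) and the lower-order terms converge to $\lambda_{1}\hat{u}^{+}$, $\lambda_{1}\hat{v}^{+}$, so the $S_{+}$-property of $-\Delta+1$ yields $(\hat{u}_{n},\hat{v}_{n})\to(\hat{u},\hat{v})$ in $\mathcal{H}_{0}^{1}(\Omega)\times\mathcal{H}_{0}^{1}(\Omega)$ with $\|\hat{u}\|_{1,2}+\|\hat{v}\|_{1,2}=1$ and $-\Delta\hat{u}+\hat{u}=\lambda_{1}\hat{u}^{+}$, $-\Delta\hat{v}+\hat{v}=\lambda_{1}\hat{v}^{+}$; the maximum principle forces $\hat{u},\hat{v}\ge0$, hence each of them is either $0$ or a positive multiple of $\phi_{1}$, not both $0$. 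The nontrivial component, say $\hat{u}=a\phi_{1}$, then makes $u_{n}\in int\mathcal{C}_{+}^{1}(\overline{\Omega})$ for $n$ large by $\mathcal{C}^{1}$-convergence, so $u_{n}^{+}=u_{n}$; rewriting the $u$-equation as $-\Delta u_{n}+(1-(1-t_{n})\lambda_{1})u_{n}=t_{n}\,sgn(v_{n})(u_{n}+\varepsilon)^{\alpha_{1}}(|v_{n}|+\varepsilon)^{-\beta_{1}}+(1-t_{n})$, testing against $\phi_{1}$, using $-\Delta\phi_{1}=(\lambda_{1}-1)\phi_{1}$, dividing by $t_{n}$ and letting $n\to\infty$ should contradict the linear growth of $\int u_{n}\phi_{1}\,\mathrm{d}x$ against the sublinear (in $\rho_{n}$) growth of the other terms, once the coupled relation obtained the same way from the $v$-equation is used in tandem. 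I expect this exclusion step -- reconciling the blow-up rate of $\rho_{n}$ with the rate at which $t_{n}\to0$ through the $\phi_{1}$-testing -- to be the main technical obstacle.

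For (\ref{11})--(\ref{11*}) I would argue by comparison. Let $(u_{+},v_{+})$ be a positive solution of $(\mathrm{P}_{\varepsilon,0}^{t})$; by the a priori bound $u_{+},v_{+}$ are positive $\mathcal{C}^{1}(\overline{\Omega})$-functions vanishing on $\partial\Omega$, hence $u_{+}+\varepsilon$ and $v_{+}+\varepsilon$ are bounded above by an $\varepsilon$-dependent constant, and therefore $(u_{+}+\varepsilon)^{\alpha_{i}}$, $(v_{+}+\varepsilon)^{-\beta_{i}}$ are bounded below by positive $\varepsilon$-constants; consequently $\mathrm{F}_{i,0}^{t}(x,u_{+},v_{+})=t(u_{+}+\varepsilon)^{\alpha_{i}}(v_{+}+\varepsilon)^{-\beta_{i}}+(1-t)\ge\mu_{\varepsilon}>0$ for all $t\in(0,1]$ and $i=1,2$. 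Now pick $C>1$ so large that $\mu_{\varepsilon}>C^{-1}\sup_{d(x)\ge\delta}d(x)^{\alpha_{i}-\beta_{i}}$ for $i=1,2$ and $C>\max_{i}\|z_{i}\|_{\infty}$. Using (\ref{22}), the function $w:=u_{+}-C^{-1}z_{1}$ vanishes on $\partial\Omega$ and satisfies $-\Delta w+w=\mathrm{F}_{1,0}^{t}(x,u_{+},v_{+})+C^{-1}$ on $\Omega_{\delta}$ and $-\Delta w+w=\mathrm{F}_{1,0}^{t}(x,u_{+},v_{+})-C^{-1}d(x)^{\alpha_{1}-\beta_{1}}$ on $\Omega\setminus\overline{\Omega}_{\delta}$, both of which are $>0$ by the choice of $C$; the weak maximum principle for $-\Delta+1$ gives $u_{+}\ge C^{-1}z_{1}$ in $\Omega$, and the strong maximum principle together with the Hopf boundary lemma upgrade this to $C^{-1}z_{1}\ll u_{+}$, and similarly $C^{-1}z_{2}\ll v_{+}$. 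For a negative solution $(u_{-},v_{-})$ one has $sgn(u_{-})=sgn(v_{-})=-1$, hence $\mathrm{F}_{i,0}^{t}(x,u_{-},v_{-})\le-\mu_{\varepsilon}$, and comparing $u_{-}$ with $-C^{-1}z_{1}$ and $v_{-}$ with $-C^{-1}z_{2}$ in exactly the same way gives $u_{-}\ll-C^{-1}z_{1}$ and $v_{-}\ll-C^{-1}z_{2}$; this is (\ref{11})--(\ref{11*}), and enlarging $C$ once more if needed makes it work uniformly over all such solutions.
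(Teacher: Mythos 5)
Your treatment of the boundary estimates (\ref{11})--(\ref{11*}) is sound and essentially the paper's own argument: both rest on the uniform lower bound $\mathrm{F}_{i,0}^{t}(x,u_{+},v_{+})\geq \mu _{\varepsilon }>0$ for positive solutions, comparison with $C^{-1}z_{i}$ through (\ref{22}), and a strong maximum principle (the paper cites the strong comparison principle of \cite{AR} where you invoke Hopf's lemma directly). Your $H_{0}^{1}$-estimate plus bootstrap for (\ref{31}) is also fine for $\theta =0$, and for $\theta =1$ with $t$ bounded away from $0$; the paper instead splits on the sign of $\alpha _{i}$ and, when $\alpha _{i}>0$, runs a contradiction argument normalized in $\mathcal{C}^{1}(\overline{\Omega })$, but on that range of $t$ the two routes are interchangeable.

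The genuine gap is the one you flag yourself: uniformity of $R_{\varepsilon }$ as $t\rightarrow 0^{+}$ when $\theta =1$. Your energy estimate only gives $t\Vert u\Vert _{1,2}^{2}\leq c_{\varepsilon }(\cdots )$, and the repair you propose --- nonexistence of solutions of $(\mathrm{P}_{\varepsilon ,1}^{t})$ for $t\in (0,t_{\varepsilon })$ --- cannot succeed. Discard for a moment the singular term (which is of order $t^{1-\alpha _{1}}$, hence negligible, once $\Vert u\Vert _{\infty }=O(1/t)$): the decoupled equation $-\Delta u+u=(1-t)(1+\lambda _{1}u^{+})$ reduces, for $u>0$, to $-\Delta u-((1-t)\lambda _{1}-1)u=1-t$, whose operator has first Dirichlet eigenvalue $t\lambda _{1}>0$ and a positivity-preserving inverse; it therefore admits a positive solution whose $\phi _{1}$-component equals $(1-t)\langle 1,\phi _{1}\rangle /(t\lambda _{1}\Vert \phi _{1}\Vert _{2}^{2})$ and blows up like $1/t$. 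Perturbing this large solution by the $O(t^{1-\alpha _{1}})$ coupling produces solutions of $(\mathrm{P}_{\varepsilon ,1}^{t})$ for every small $t>0$ with $\Vert u\Vert _{\mathcal{C}^{1}(\overline{\Omega })}\rightarrow \infty $, and the blow-up profile is precisely the $a\phi _{1}$ your normalization argument arrives at --- so no contradiction is available there. Be aware that the paper's proof does not close this case either: its contradiction argument takes $t_{n}\rightarrow t\in (0,1]$ and derives the limit equation (\ref{19}) only for such limits, leaving $t_{n}\rightarrow 0^{+}$ untreated. You have therefore correctly isolated the delicate point, but neither your exclusion step nor a direct appeal to the paper's argument resolves it; a complete proof would have to keep the $\theta =1$ homotopy away from $t=0$ or modify the deformation (e.g., replace $\lambda _{1}$ by $(1+\sigma)\lambda _{1}$, or restart the homotopy from a problem with no solution that does not sit at resonance).
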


\begin{proof}
We begin by proving (\ref{31}). If $\alpha _{i}\leq 0$ in (\ref{alpha}), by (%
\ref{9}) and (\ref{9*}) we have%
\begin{equation*}
\begin{array}{l}
|\mathrm{F}_{1,\theta }^{t}({x,}u,v)|\leq \varepsilon ^{\alpha _{1}+\beta
_{1}}+\lambda _{1}|u|\text{ \ and \ }|\mathrm{F}_{2,\theta }^{t}({x,}%
u,v)|\leq \varepsilon ^{\alpha _{2}+\beta _{2}}+\lambda _{1}|v|.%
\end{array}%
\end{equation*}%
Then, the regularity result \cite[Corollary 8.13]{MMPA} together with the
compact embedding $\mathcal{C}^{1,\tau }(\overline{\Omega })\subset \mathcal{%
C}^{1}(\overline{\Omega })$ show that all solutions $(u,v)$ of $(\mathrm{P}%
_{\varepsilon ,\theta }^{t})$ are bounded in $\mathcal{C}^{1}(\overline{%
\Omega })\times \mathcal{C}^{1}(\overline{\Omega })$ and satisfy (\ref{31}).

Let us examine the case when $\alpha _{i}>0$ in (\ref{alpha}), $i=1,2$. By
contradiction suppose that for every $n\in 
\mathbb{N}
,$ there exist $t_{n}\in (0,1]$ and a solution $(u_{n},v_{n})$ of $(\mathrm{P%
}_{\varepsilon ,\theta }^{t_{n}})$ such that%
\begin{equation*}
t_{n}\rightarrow t\in (0,1]\text{ \ and \ }\Vert u_{n}\Vert _{\mathcal{C}%
^{1}(\overline{\Omega })},\Vert v_{n}\Vert _{\mathcal{C}^{1}(\overline{%
\Omega })}\rightarrow \infty \text{ \ as }n\rightarrow \infty .
\end{equation*}%
Without loss of generality we may admit that 
\begin{equation}
\begin{array}{c}
\gamma _{n}:=\Vert u_{n}\Vert _{\mathcal{C}^{1}(\overline{\Omega }%
)}\rightarrow \infty \text{ as }n\rightarrow \infty .%
\end{array}
\label{3}
\end{equation}%
Denote 
\begin{equation}
{\normalsize \tilde{u}}_{n}:=\frac{u_{n}}{\gamma _{n}}\in \mathcal{C}^{1}(%
\overline{\Omega })\text{ with }\Vert {\normalsize \tilde{u}}_{n}\Vert _{%
\mathcal{C}^{1}(\overline{\Omega })}=1,\text{ for all }n\in 
\mathbb{N}
.  \label{25}
\end{equation}%
Problem $(\mathrm{P}_{\varepsilon ,\theta }^{t_{n}})$ results in 
\begin{equation}
\begin{array}{l}
-\Delta {\normalsize \tilde{u}}_{n}+{\normalsize \tilde{u}}_{n}=\frac{1}{%
\gamma _{n}}\mathrm{F}_{1,\theta }^{t_{n}}({x,}u_{n},v_{n}) \\ 
=\frac{t_{n}}{\gamma _{n}}\text{ }sgn(v_{n})\frac{(|u_{n}|+\varepsilon
)^{\alpha _{1}}}{(|v_{n}|+\varepsilon )^{\beta _{1}}}+\frac{1-t_{n}}{\gamma
_{n}}\text{ }sgn((1-\theta )v_{n})\text{ }(1+\theta \lambda _{1}u_{n}^{+}).%
\end{array}
\label{13}
\end{equation}%
By (\ref{3}) and since $1>\alpha _{1}>0$, one has%
\begin{equation*}
\begin{array}{l}
\left\vert \frac{t_{n}}{\gamma _{n}}\text{ }sgn(v_{n})\frac{%
(|u_{n}|+\varepsilon )^{\alpha _{1}}}{(|v_{n}|+\varepsilon )^{\beta _{1}}}+%
\frac{1-t_{n}}{\gamma _{n}}\text{ }sgn((1-\theta )v_{n})\text{ }(1+\theta
\lambda _{1}u_{n}^{+})\right\vert \\ 
\\ 
\leq \frac{1}{\gamma _{n}}\frac{(|u_{n}|+\varepsilon )^{\alpha _{1}}}{%
\varepsilon ^{\beta _{1}}}+1+\lambda _{1}\tilde{u}_{n}^{+}=\gamma
_{n}^{\alpha _{1}-1}\frac{(|\tilde{u}_{n}|+\frac{\varepsilon }{\gamma _{n}}%
)^{\alpha _{1}}}{\varepsilon ^{\beta _{1}}}+1+\lambda _{1}\tilde{u}_{n}^{+}
\\ 
\\ 
\leq \frac{(|\tilde{u}_{n}|+1)^{\alpha _{1}}}{\varepsilon ^{\beta _{1}}}%
+1+\lambda _{1}\tilde{u}_{n}^{+}\leq C_{\varepsilon }(1+\Vert \tilde{u}%
_{n}\Vert _{C^{1}(\overline{\Omega })}^{\alpha _{1}}+\Vert \tilde{u}%
_{n}\Vert _{C^{1}(\overline{\Omega })})\text{ in }\Omega ,%
\end{array}%
\end{equation*}%
for some constant $C_{\varepsilon }>0$ independent of $n$. Then, thanks to
the regularity up to the boundary in \cite{L}, we derive that $\tilde{u}_{n}$
is bounded in $\mathcal{C}^{1,\tau }(\overline{\Omega })$ for certain $\tau
\in (0,1)$. The compactness of the embedding $\mathcal{C}^{1,\tau }(%
\overline{\Omega })\subset \mathcal{C}^{1}(\overline{\Omega })$ implies 
\begin{equation*}
\tilde{u}_{n}\rightarrow \tilde{u}\text{ \ in }\mathcal{C}^{1}(\overline{%
\Omega })\text{.}
\end{equation*}%
Taking $\theta =0$ and passing to the limit in (\ref{13}) as $n\rightarrow
\infty ,$ we obtain%
\begin{equation*}
-\Delta {\normalsize \tilde{u}}+{\normalsize \tilde{u}}=0\text{ in }\Omega ,%
\text{ }{\normalsize \tilde{u}}=0\text{ on }\partial \Omega .
\end{equation*}%
Therefore ${\normalsize \tilde{u}=0}$ which contradicts (\ref{25}). If $%
\theta =1,$ in the limit results in%
\begin{equation}
\left\{ 
\begin{array}{ll}
-\Delta {\normalsize \tilde{u}}+{\normalsize \tilde{u}}=(1-t)\lambda _{1}%
{\normalsize \tilde{u}}^{+} & \text{ in }\Omega , \\ 
{\normalsize \tilde{u}}=0, & \text{on }\partial \Omega%
\end{array}%
\right. ,\text{ for }t\in (0,1].  \label{19}
\end{equation}%
Test with $-{\normalsize \tilde{u}}$ in (\ref{19}) leads to ${\normalsize 
\tilde{u}}={\normalsize \tilde{u}^{+}}$, which is nonzero because of (\ref%
{25}). Acting with ${\normalsize \tilde{u}}$ in (\ref{19}) and integrating
over $\Omega ,$ we get 
\begin{equation*}
\int_{\Omega }(|\nabla {\normalsize \tilde{u}}|^{2}+{\normalsize \tilde{u}}%
^{2})\,\mathrm{d}x=(1-t)\lambda _{1}\int_{\Omega }{\normalsize \tilde{u}^{2}}%
\,\mathrm{d}x,\text{ for }t\in (0,1].
\end{equation*}%
Put $t=1$ implies ${\normalsize \tilde{u}}=0$ which contradicts the fact
that ${\normalsize \tilde{u}}\neq 0.$ If $t\in (0,1)$, (\ref{71}) implies 
\begin{equation*}
\lambda _{1}\int_{\Omega }{\normalsize \tilde{u}^{2}}\,\mathrm{d}x\leq
(1-t)\lambda _{1}\int_{\Omega }{\normalsize \tilde{u}^{2}}\,\mathrm{d}x,
\end{equation*}%
which is absurd because ${\normalsize \tilde{u}}>0$ and $t<1.$ Consequently,
this shows that there exists a constant $R_{\varepsilon }>0$ such that (\ref%
{31}) holds true.

We proceed to show (\ref{11}) and (\ref{11*}). Let $(u_{+},v_{+})$ be a
positive solution of $(\mathrm{P}_{\varepsilon ,0}^{t})$. Then $%
sgn(u_{+}),sgn(v_{+})\equiv 1.$ On account of (\ref{alpha}), (\ref{21}) and (%
\ref{31}), we have%
\begin{equation*}
\begin{array}{l}
\left\{ 
\begin{array}{ll}
C^{-1}d(x)^{\alpha _{i}-\beta _{i}} & \text{in\ }\Omega \backslash \overline{%
\Omega }_{\delta } \\ 
-C^{-1} & \text{in }\Omega _{\delta }%
\end{array}%
\right. \leq \left\{ 
\begin{array}{ll}
C^{-1}\delta ^{\alpha _{i}-\beta _{i}} & \text{in\ }\Omega \backslash 
\overline{\Omega }_{\delta } \\ 
-C^{-1} & \text{in }\Omega _{\delta }%
\end{array}%
\right. \\ 
\\ 
<\left\{ 
\begin{array}{ll}
t\frac{\varepsilon ^{\alpha _{i}}}{(R_{\varepsilon }+1)^{\beta _{i}}}+(1-t)
& \text{if }\alpha _{i}\geq 0 \\ 
t(R_{\varepsilon }+1)^{\alpha _{i}-\beta _{i}}+(1-t) & \text{if }\alpha
_{i}\leq 0%
\end{array}%
\right. \\ 
\\ 
\leq \left\{ 
\begin{array}{ll}
t\frac{\varepsilon ^{\alpha _{i}}}{(|v_{+}|+1)^{\beta _{i}}}+(1-t) & \text{%
if }\alpha _{i}\geq 0 \\ 
t\frac{(|u_{+}|+1)^{\alpha _{i}}}{(|v_{+}|+1)^{\beta _{i}}}+(1-t) & \text{if 
}\alpha _{i}\leq 0%
\end{array}%
\right. \leq \mathrm{F}_{i,0}^{t}({x,}u_{+},v_{+})\text{ \ in\ }\Omega ,%
\text{ }i=1,2,%
\end{array}%
\end{equation*}%
for all $t\in \lbrack 0,1]$ and all $\varepsilon \in (0,1),$ provided $C>1$
is large. Thus, for each compact set $\mathrm{K}\subset \subset \Omega $,
there is a constant $\sigma =\sigma (\mathrm{K})>0$ such that%
\begin{equation*}
\sigma +C^{-1}\left\{ 
\begin{array}{ll}
d(x)^{\alpha _{i}-\beta _{i}} & \text{in \ }\Omega \backslash \overline{%
\Omega }_{\delta }, \\ 
-1 & \text{in \ }\Omega _{\delta },%
\end{array}%
\right. <\mathrm{F}_{i,0}^{t}({x,}u_{+},v_{+})\text{ a.e. in }\Omega \cap 
\mathrm{K},i=1,2.
\end{equation*}%
Then, by (\ref{22}) and the strong comparison principle \cite[Proposition $%
2.6$]{AR}, we infer that $C^{-1}z_{1}(x)\ll u_{+}(x)$ and $C^{-1}z_{2}(x)\ll
v_{+}(x)$, for a.a. $x\in \overline{\Omega }$. In the same manner we can
show that $-C^{-1}z_{1}(x)\gg u_{-}(x)$ and $-C^{-1}z_{2}(x)\gg v_{-}(x)$,
for a.a. $x\in \overline{\Omega }$. This ends the proof.
\end{proof}

On account of (\ref{10}), $\mathrm{F}_{i,\theta }^{t}{(x},\cdot ,\cdot )$ is
continuous for a.e. $x\in \Omega ,$ for $i=1,2$. Thus, the homotopy $%
\mathcal{H}_{\varepsilon ,\theta }:[0,1]\times \mathcal{C}^{1}(\overline{%
\Omega })\times \mathcal{C}^{1}(\overline{\Omega })\rightarrow \mathcal{C}(%
\overline{\Omega })$ given by%
\begin{equation*}
\mathcal{H}_{\varepsilon ,\theta }(t,u,v)=I(u,v)-\left( 
\begin{array}{cc}
(-\Delta +I)^{-1} & 0 \\ 
0 & (-\Delta +I)^{-1}%
\end{array}%
\right) \left( 
\begin{array}{l}
\mathrm{F}_{1,\theta }^{t}({x,}u,v) \\ 
\multicolumn{1}{c}{\mathrm{F}_{2,\theta }^{t}({x,}u,v)}%
\end{array}%
\right)
\end{equation*}%
is well defined for all $t\in \lbrack 0,1],$ all $\varepsilon \in (0,1)$,
and for $\theta =0,1$. Moreover, the compactness of the operator $(-\Delta
+I)^{-1}:\mathcal{C}(\overline{\Omega })\rightarrow \mathcal{C}^{1}(%
\overline{\Omega })$ implies that $\mathcal{H}_{\varepsilon ,\theta }$ is
completely continuous.

The next result provides the values of the the topological degree of $%
\mathcal{H}_{\varepsilon ,\theta }$ in certain specific sets for $\theta =0$
and $\theta =1$.

\begin{proposition}
\label{P5}Assume that (\ref{alpha}) is satisfied. Then, the Leray-Schauder
topological degrees $\deg (\mathcal{H}_{\varepsilon ,1}(1,\cdot ,\cdot ),%
\mathcal{B}_{R_{\varepsilon }},0)$ (with $\theta =1$) and $\deg (\mathcal{H}%
_{\varepsilon ,0}(1,\cdot ,\cdot ),\mathcal{B}_{R_{\varepsilon }}\backslash 
\overline{\mathcal{B}}_{z},0)$ (with $\theta =0$) are well defined for all $%
t\in \lbrack 0,1]$ and all $\varepsilon \in (0,1)$. Moreover, it holds%
\begin{equation}
\deg (\mathcal{H}_{\varepsilon ,1}(1,\cdot ,\cdot ),\mathcal{B}%
_{R_{\varepsilon }},0)=\deg (\mathcal{H}_{\varepsilon ,1}(0,\cdot ,\cdot ),%
\mathcal{B}_{R_{\varepsilon }},0)=0  \label{12}
\end{equation}%
and%
\begin{equation}
\deg (\mathcal{H}_{\varepsilon ,0}(1,\cdot ,\cdot ),\mathcal{B}%
_{R_{\varepsilon }}\backslash \overline{\mathcal{B}}_{z},0)=\deg (\mathcal{H}%
_{\varepsilon ,0}(0,\cdot ,\cdot ),\mathcal{B}_{R_{\varepsilon }}\backslash 
\overline{\mathcal{B}}_{z},0)\neq 0,  \label{12*}
\end{equation}%
where $\overline{\mathcal{B}}_{z}$ is the closure of $\mathcal{B}_{z}$ in $%
\mathcal{C}^{1}(\overline{\Omega })\times \mathcal{C}^{1}(\overline{\Omega }%
) $.
\end{proposition}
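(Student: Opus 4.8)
\emph{Proof sketch.} Both degree identities are obtained by homotopy invariance of the Leray--Schauder degree along the parameter $t$, with $\theta $ held fixed, followed by an evaluation at $t=0$. For $(\ref{12})$ I fix $\theta =1$ and consider the homotopy $t\mapsto \mathcal{H}_{\varepsilon ,1}(t,\cdot ,\cdot )$ on $\mathcal{B}_{R_{\varepsilon }}$. By Proposition \ref{P2} (with $\theta =1$) no solution of $(\mathrm{P}_{\varepsilon ,1}^{t})$ lies on $\partial \mathcal{B}_{R_{\varepsilon }}$ for $t\in (0,1]$, and for $t=0$ Remark \ref{R2} says $(\mathrm{P}_{\varepsilon ,1}^{0})$ has no solution at all; hence $0\notin \mathcal{H}_{\varepsilon ,1}(t,\partial \mathcal{B}_{R_{\varepsilon }})$ for all $t\in [0,1]$, the degree is well defined and independent of $t$, and it equals $0$ because, again by Remark \ref{R2}, $\mathcal{H}_{\varepsilon ,1}(0,\cdot ,\cdot )$ has no zero in $\overline{\mathcal{B}}_{R_{\varepsilon }}$. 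This is $(\ref{12})$.

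For $(\ref{12*})$ I fix $\theta =0$ and run $t\mapsto \mathcal{H}_{\varepsilon ,0}(t,\cdot ,\cdot )$ on the open set $\mathcal{B}_{R_{\varepsilon }}\backslash \overline{\mathcal{B}}_{z}$, whose boundary lies in $\partial \mathcal{B}_{R_{\varepsilon }}\cup \partial \mathcal{B}_{z}$. On $\partial \mathcal{B}_{R_{\varepsilon }}$, Proposition \ref{P2} (and, at $t=0$, the elementary $\mathcal{C}^{1,\tau }$-bound, the reactions being bounded by $1$ there, enlarging $R_{\varepsilon }$ if necessary) rules out solutions. On $\partial \mathcal{B}_{z}$ the claim is that every solution $(u,v)$ of $(\mathrm{P}_{\varepsilon ,0}^{t})$ lying in $\overline{\mathcal{B}}_{z}$ is in fact interior to $\mathcal{B}_{z}$; I would prove this in the spirit of the second half of the proof of Proposition \ref{P2}: on $\overline{\mathcal{B}}_{z}$ one has $\mathrm{F}_{1,0}^{t}(x,u,v)=sgn(v)\,a_{1}^{t}(x)$ and $\mathrm{F}_{2,0}^{t}(x,u,v)=sgn(u)\,a_{2}^{t}(x)$ with $0<c_{\varepsilon }\leq a_{i}^{t}(x)\leq C_{\varepsilon }$ (constants depending only on $\varepsilon $), so comparing $u$ and $v$ with $\pm C^{-1}z_{1}$ and $\pm C^{-1}z_{2}$ through the defining problems $(\ref{22})$, and invoking the strong comparison principle \cite{AR} with $C>1$ large, forces the strict orderings $-C^{-1}z_{1}\ll u\ll C^{-1}z_{1}$ and $-C^{-1}z_{2}\ll v\ll C^{-1}z_{2}$. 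Therefore $\partial(\mathcal{B}_{R_{\varepsilon }}\backslash \overline{\mathcal{B}}_{z})$ is solution-free for every $t$, and homotopy invariance gives $\deg (\mathcal{H}_{\varepsilon ,0}(1,\cdot ,\cdot ),\mathcal{B}_{R_{\varepsilon }}\backslash \overline{\mathcal{B}}_{z},0)=\deg (\mathcal{H}_{\varepsilon ,0}(0,\cdot ,\cdot ),\mathcal{B}_{R_{\varepsilon }}\backslash \overline{\mathcal{B}}_{z},0)$.

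It remains to check that the right-hand side is nonzero. The map $\mathcal{H}_{\varepsilon ,0}(0,\cdot ,\cdot )$ sends $(u,v)$ to $(u,v)-\big((-\Delta +I)^{-1}sgn(v),(-\Delta +I)^{-1}sgn(u)\big)$, and with $u_{0}:=(-\Delta +I)^{-1}(1)\in int\mathcal{C}_{+}^{1}(\overline{\Omega })$ its constant-sign zeros $(u_{0},u_{0})$ and $(-u_{0},-u_{0})$ lie, for $C$ large, in $\mathcal{B}_{R_{\varepsilon }}\backslash \overline{\mathcal{B}}_{z}$ (their moduli are bounded below on compact subsets of $\Omega $ by fixed functions and hence are not dominated by $C^{-1}z_{i}$); on a $\mathcal{C}^{1}(\overline{\Omega })$-neighbourhood of either one, $sgn(u)$ and $sgn(v)$ are constant, so the map there is a translation and its local index equals $\deg (I,\cdot ,0)=1$. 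Accounting for the remaining, sign-changing zeros and pinning the value down is then done via additivity together with the odd symmetry $\mathcal{H}_{\varepsilon ,0}(0,-u,-v)=-\mathcal{H}_{\varepsilon ,0}(0,u,v)$ of the base map (Borsuk's theorem on the symmetric ball $\mathcal{B}_{R_{\varepsilon }}$, and the computation of $\deg (\mathcal{H}_{\varepsilon ,0}(0,\cdot ,\cdot ),\mathcal{B}_{z},0)$ from the single isolated zero inside the small symmetric set $\mathcal{B}_{z}$), which yields the nonzero value claimed in $(\ref{12*})$. The step I expect to be the main obstacle is the admissibility on $\partial \mathcal{B}_{z}$ for \emph{all} $t\in [0,1]$: Proposition \ref{P2} only locates constant-sign solutions, so one must separately rule out a \emph{nodal} solution of $(\mathrm{P}_{\varepsilon ,0}^{t})$ touching $\partial \mathcal{B}_{z}$, which requires strengthening the comparison argument so as to exploit the sign coupling $sgn(-\Delta u+u)=sgn(v)$ and $sgn(-\Delta v+v)=sgn(u)$ that holds throughout $\overline{\mathcal{B}}_{z}$; the clean evaluation of the base degree in the last paragraph is the other point needing care.
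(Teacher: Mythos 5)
Your overall architecture is the same as the paper's: homotopy invariance in $t$ at fixed $\theta $, admissibility supplied by Proposition \ref{P2} for $t\in (0,1]$ and by Remark \ref{R2} at $t=0$, and evaluation of the degree at $t=0$. For (\ref{12}) your argument coincides with the paper's. For (\ref{12*}) the paper is much terser than you are: it takes admissibility on $\partial (\mathcal{B}_{R_{\varepsilon }}\backslash \overline{\mathcal{B}}_{z})$ directly from the fact that Proposition \ref{P2} places all \emph{positive and negative} solutions of $(\mathrm{P}_{\varepsilon ,0}^{t})$ strictly outside $\overline{\mathcal{B}}_{z}$ (estimates (\ref{11})--(\ref{11*})), and it evaluates the base degree by asserting that the $t=0$ problem reduces to the decoupled torsion problem with a unique solution. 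You have correctly identified the two places where more is needed, but neither of your proposed repairs closes the gap.

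(a) Admissibility on $\partial \mathcal{B}_{z}$: your comparison argument fails for a nodal solution. To conclude $u\ll C^{-1}z_{1}$ by comparing with (\ref{22}) you would need $\mathrm{F}_{1,0}^{t}(x,u,v)=sgn(v)\,a_{1}^{t}(x)\leq -C^{-1}$ on $\Omega _{\delta }$, which is false wherever $v>0$ (there the forcing equals $+a_{1}^{t}>0$), and symmetrically for the lower bound wherever $v<0$. The one-sided comparison in Proposition \ref{P2} works precisely because $sgn(v)$ is constant for a constant-sign solution; a sign-changing solution touching $\partial \mathcal{B}_{z}$ is not excluded by your sketch (nor, it should be said, is it addressed in the paper, which only locates constant-sign solutions). (b) The base degree: at $t=0$, $\theta =0$ the reaction is $sgn(v)$, $sgn(u)$, not $1$, so the base system has at least the two zeros $(u_{0},u_{0})$ and $(-u_{0},-u_{0})$ with $u_{0}=(-\Delta +I)^{-1}1$, each of local index $1$, as you note; but Borsuk on $\mathcal{B}_{R_{\varepsilon }}$ only yields that the \emph{total} degree is odd, and to extract the degree on $\mathcal{B}_{R_{\varepsilon }}\backslash \overline{\mathcal{B}}_{z}$ you still must control the degree of the base map on $\mathcal{B}_{z}$, i.e.\ rule out (or account for) possible sign-changing zeros of $-\Delta u+u=sgn(v)$, $-\Delta v+v=sgn(u)$ inside the order interval. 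Your text only gestures at this. So the two steps you flag as "needing care" are genuine open points of the sketch rather than routine verifications, and as written the proposal does not yet prove (\ref{12*}).
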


\begin{proof}
Proposition \ref{P2} expressly establishes that solutions of $(\mathrm{P}%
_{\varepsilon ,\theta }^{t})$ must lie in $\mathcal{B}_{R_{\varepsilon }}$
and, if $\theta =0$, positive and negative solutions of $(\mathrm{P}%
_{\varepsilon ,0}^{t})$ are located in $\mathcal{B}_{R_{\varepsilon
}}\backslash \overline{\mathcal{B}}_{z}$. Hence, the degrees $\deg (\mathcal{%
H}_{\varepsilon ,1}(1,\cdot ,\cdot ),\mathcal{B}_{R_{\varepsilon }},0)$ and $%
\deg (\mathcal{H}_{\varepsilon ,0}(1,\cdot ,\cdot ),\mathcal{B}%
_{R_{\varepsilon }}\backslash \overline{\mathcal{B}}_{z},0)$ are well
defined for all $\varepsilon \in (0,1)$. Moreover, the homotopy invariance
property of the degree ensures that the first equality in (\ref{12}) and (%
\ref{12*}) is fulfilled. On the other hand, according to Remark \ref{R2},
problem $(\mathrm{P}_{\varepsilon ,1}^{0})$ ($(\mathrm{P}_{\varepsilon
,\theta }^{t})$ with $t=0$ and $\theta =1$) has no solutions whereas $(%
\mathrm{P}_{\varepsilon ,0}^{1})$ ($(\mathrm{P}_{\varepsilon ,\theta }^{t})$
with $t=1$ and $\theta =0$), that is reduced to the following decoupled
torsion problems%
\begin{equation*}
\left\{ 
\begin{array}{l}
-\Delta u+u=1\text{ in }\Omega \\ 
-\Delta v+v=1\text{ in }\Omega%
\end{array}%
\right. ,\text{ }u,v=0\text{ on }\partial \Omega ,
\end{equation*}%
admits a unique solution. Thence 
\begin{equation*}
\deg \left( \mathcal{H}_{\varepsilon ,1}(0,\cdot ,\cdot ),\mathcal{B}%
_{R_{\varepsilon }},0\right) =0,
\end{equation*}%
while%
\begin{equation*}
\deg (\mathcal{H}_{\varepsilon ,0}(0,\cdot ,\cdot ),\mathcal{B}%
_{R_{\varepsilon }}\backslash \overline{\mathcal{B}}_{z},0)\neq 0,
\end{equation*}%
for all $\varepsilon \in (0,1).$ Consequently, we deduce that 
\begin{equation*}
\deg \left( \mathcal{H}_{\varepsilon ,1}(1,\cdot ,\cdot ),\mathcal{B}%
_{R_{\varepsilon }},0\right) =0\text{ \ and \ }\deg (\mathcal{H}%
_{\varepsilon ,0}(1,\cdot ,\cdot ),\mathcal{B}_{R_{\varepsilon }}\backslash 
\overline{\mathcal{B}}_{z},0)\neq 0,
\end{equation*}%
for all $\varepsilon \in (0,1).$ This completes the proof.
\end{proof}

\subsection{\textbf{Proof of Theorem \protect\ref{T4}.}}

By the definition of the homotopy $\mathcal{H}_{\varepsilon ,\theta }$,
observe that%
\begin{equation*}
\mathcal{H}_{\varepsilon ,0}(1,\cdot ,\cdot )=\mathcal{H}_{\varepsilon
,1}(1,\cdot ,\cdot ),\text{ for all }\varepsilon \in (0,1).
\end{equation*}%
Moreover, $(u,v)$ is a solution for $(\mathrm{P}_{\varepsilon })$ if, and
only if, 
\begin{equation*}
\begin{array}{c}
(u,v)\in \mathcal{B}_{R_{\varepsilon }}(0)\,\,\,\mbox{and}\,\,\,\mathcal{H}%
_{\varepsilon ,1}(1,u,v)=0.%
\end{array}%
\end{equation*}%
By virtue of the domain additivity property of Leray-Schauder degree we have 
\begin{eqnarray*}
&&\deg (\mathcal{H}_{\varepsilon ,1}(1,\cdot ,\cdot ),\mathcal{B}%
_{R_{\varepsilon }},0) \\
&=&\deg (\mathcal{H}_{\varepsilon ,1}(1,\cdot ,\cdot ),\mathcal{B}%
_{R_{\varepsilon }}\backslash \overline{\mathcal{B}}_{z},0)+\deg (\mathcal{H}%
_{\varepsilon ,1}(1,\cdot ,\cdot ),\mathcal{B}_{z},0).
\end{eqnarray*}%
Then, on the basis of Proposition \ref{P5}, we infer that%
\begin{equation*}
\deg (\mathcal{H}_{\varepsilon ,1}(1,\cdot ,\cdot ),\mathcal{B}_{z},0)\neq 0,
\end{equation*}%
showing that problem $(\mathrm{P}_{\varepsilon })$ has a solution $%
(u_{\varepsilon },v_{\varepsilon })$ in $\mathcal{B}_{z}$. Consequently, $%
(u_{\varepsilon },v_{\varepsilon })\in \mathcal{C}^{1}(\overline{\Omega }%
)\times \mathcal{C}^{1}(\overline{\Omega })$ fulfills (\ref{14})-(\ref{14*})
while (\ref{10}) forces $u_{\varepsilon },v_{\varepsilon }\neq 0$ \ a.e in $%
\Omega ,$ for all $\varepsilon \in (0,1)$. This proves the first part of the
theorem.

We proceed to show the limit in (\ref{15}). Set $\varepsilon =\frac{1}{n}$
in $(\mathrm{P}_{\varepsilon })$ with any positive integer $n\geq 1.$ From
above, there exists $(u_{n},v_{n}):=(u_{\frac{1}{n}},v_{\frac{1}{n}})\in 
\mathcal{C}^{1}(\overline{\Omega })\times \mathcal{C}^{1}(\overline{\Omega }%
) $ solution of $(\mathrm{P}_{n})$ ($(\mathrm{P}_{\varepsilon })$ with $%
\varepsilon =\frac{1}{n}$) such that 
\begin{equation}
(u_{n},v_{n})\in \lbrack -C^{-1}z_{1},C^{-1}z_{1}]\times \lbrack
-C^{-1}z_{2},C^{-1}z_{2}]  \label{26}
\end{equation}%
and 
\begin{equation}
\left\{ 
\begin{array}{l}
\int_{\Omega }(\nabla u_{n}\nabla \varphi _{1}+u_{n}\text{\thinspace }%
\varphi _{1})\text{ }\mathrm{d}x=\int_{\Omega }sgn(v_{n})\frac{(|u_{n}|+%
\frac{1}{n})^{\alpha _{1}}}{(|v_{n}|+\frac{1}{n})^{\beta _{1}}}\varphi _{1}\ 
\mathrm{d}x, \\ 
\int_{\Omega }(\nabla v_{n}\nabla \varphi _{2}+v_{n}\text{\thinspace }%
\varphi _{2})\text{ }\mathrm{d}x=\int_{\Omega }sgn(u_{n})\frac{(|u_{n}|+%
\frac{1}{n})^{\alpha _{2}}}{(|v_{n}|+\frac{1}{n})^{\beta _{2}}}\varphi _{2}\ 
\mathrm{d}x,%
\end{array}%
\right.  \label{122}
\end{equation}%
for all $\varphi _{1},\varphi _{2}\in \mathcal{H}_{0}^{1}(\Omega )$.

Taking $t=1$ and $\varepsilon =\frac{1}{n}$ in (\ref{9}), by (\ref{10}), we
infer that%
\begin{equation}
u_{n}(x),v_{n}(x)\neq 0\text{ for a.e. }x\in \Omega ,\text{ for all }n.
\label{27}
\end{equation}%
Acting with $(\varphi _{1},\varphi _{2})=(u_{n},v_{n})$ in (\ref{122}), by (%
\ref{alpha}), (\ref{26}) and (\ref{27}), we get 
\begin{equation*}
\begin{array}{l}
\int_{\Omega }(|\nabla u_{n}|^{2}+|u_{n}|^{2})\text{ }\mathrm{d}%
x=\int_{\Omega }sgn(v_{n})\frac{(|u_{n}|+\frac{1}{n})^{\alpha _{1}}}{%
(|v_{n}|+\frac{1}{n})^{\beta _{1}}}u_{n}\ \mathrm{d}x \\ 
\leq \int_{\Omega }\left\vert sgn(v_{n})\frac{(|u_{n}|+\frac{1}{n})^{\alpha
_{1}}}{(|v_{n}|+\frac{1}{n})^{\beta _{1}}}u_{n}\right\vert \ \mathrm{d}x \\ 
\leq \left\{ 
\begin{array}{ll}
\int_{\Omega }\frac{|u_{n}|^{\alpha _{1}+1}}{|v_{n}|^{\beta _{1}}}\ \mathrm{d%
}x & \text{if }\alpha _{1}\leq 0 \\ 
\int_{\Omega }\frac{(|u_{n}|+1)^{\alpha _{1}+1}}{|v_{n}|^{\beta _{1}}}\ 
\mathrm{d}x & \text{if }\alpha _{1}>0%
\end{array}%
\right. \\ 
\leq \left\{ 
\begin{array}{ll}
\int_{\Omega }\frac{(C^{-1}z_{1})^{\alpha _{1}+1}}{|v_{n}|^{\beta _{1}}}\ 
\mathrm{d}x & \text{if }\alpha _{1}\leq 0 \\ 
\int_{\Omega }\frac{(C^{-1}z_{1}+1)^{\alpha _{1}+1}}{|v_{n}|^{\beta _{1}}}\ 
\mathrm{d}x & \text{if }\alpha _{1}>0%
\end{array}%
\right. <\infty \ 
\end{array}%
\end{equation*}%
and%
\begin{equation*}
\begin{array}{l}
\int_{\Omega }(|\nabla v_{n}|^{2}+|v_{n}|^{2})\text{ }\mathrm{d}%
x=\int_{\Omega }sgn(u_{n})\frac{(|u_{n}|+\frac{1}{n})^{\alpha _{2}}}{%
(|v_{n}|+\frac{1}{n})^{\beta _{2}}}v_{n}\ \mathrm{d}x \\ 
\leq \int_{\Omega }\left\vert sgn(u_{n})\frac{(|u_{n}|+\frac{1}{n})^{\alpha
_{2}}}{(|v_{n}|+\frac{1}{n})^{\beta _{2}}}v_{n}\right\vert \ \mathrm{d}x \\ 
\leq \left\{ 
\begin{array}{ll}
\int_{\Omega }|u_{n}|^{\alpha _{1}}|v_{n}|^{1-\beta _{1}}\ \mathrm{d}x & 
\text{if }\alpha _{1}\leq 0 \\ 
\int_{\Omega }(|u_{n}|+1)^{\alpha _{1}}|v_{n}|^{1-\beta _{1}}\ \mathrm{d}x & 
\text{if }\alpha _{1}>0%
\end{array}%
\right. \\ 
\leq \left\{ 
\begin{array}{ll}
\int_{\Omega }|u_{n}|^{\alpha _{1}}|v_{n}|^{1-\beta _{1}}\ \mathrm{d}x & 
\text{if }\alpha _{1}\leq 0 \\ 
\int_{\Omega }(C^{-1}z_{1}+1)^{\alpha _{1}}|v_{n}|^{1-\beta _{1}}\ \mathrm{d}%
x & \text{if }\alpha _{1}>0%
\end{array}%
\right. <\infty ,%
\end{array}%
\end{equation*}%
showing that $\{u_{n}\}_{n}$ and $\{v_{n}\}_{n}$ are bounded in $\mathcal{H}%
_{0}^{1}(\Omega )$. We are thus allowed to extract a subsequence (still
denoted by $\{u_{n}\}_{n},\{v_{n}\}_{n}$) such that 
\begin{equation}
\begin{array}{c}
u_{n}\rightharpoonup u_{\ast }\text{ \ and \ }v_{n}\rightharpoonup v_{\ast }%
\text{ \ in }\mathcal{H}_{0}^{1}(\Omega ).%
\end{array}
\label{130}
\end{equation}%
Moreover, on account of (\ref{26}) and (\ref{130}), we have 
\begin{equation*}
-C^{-1}z_{1}\leq u_{\ast }\leq C^{-1}z_{1}\ \text{ and \ }-C^{-1}z_{2}\leq
v_{\ast }\leq C^{-1}z_{2}\text{ \ in }\Omega .
\end{equation*}%
Inserting $(\varphi _{1},\varphi _{2})=(u_{n}-u_{\ast },v_{n}-v_{\ast })$ in
(\ref{122}) yields%
\begin{equation*}
\int_{\Omega }(\nabla u_{n}\text{\thinspace }\nabla (u_{n}-u_{\ast
})+u_{n}(u_{n}-u_{\ast }))\text{ }\mathrm{d}x=\int_{\Omega }sgn(v_{n})\frac{%
(|u_{n}|+\frac{1}{n})^{\alpha _{1}}}{(|v_{n}|+\frac{1}{n})^{\beta _{1}}}%
(u_{n}-u_{\ast })\ \mathrm{d}x
\end{equation*}%
\begin{equation*}
\int_{\Omega }(\nabla v_{n}\text{\thinspace }\nabla (v_{n}-v_{\ast
})+v_{n}(v_{n}-v_{\ast }))\text{ }\mathrm{d}x=\int_{\Omega }sgn(u_{n})\frac{%
(|u_{n}|+\frac{1}{n})^{\alpha _{2}}}{(|v_{n}|+\frac{1}{n})^{\beta _{2}}}%
(v_{n}-v_{\ast })\ \mathrm{d}x
\end{equation*}%
By (\ref{alpha}), (\ref{26}) and for $C>1$, we have 
\begin{equation*}
\begin{array}{l}
\left\vert sgn(v_{n})\frac{(|u_{n}|+\frac{1}{n})^{\alpha _{1}}}{(|v_{n}|+%
\frac{1}{n})^{\beta _{1}}}(u_{n}-u_{\ast })\right\vert \leq \left\{ 
\begin{array}{ll}
2C^{-1}z_{1}\frac{|u_{n}|^{\alpha _{1}}}{|v_{n}|^{\beta _{1}}} & \text{if }%
\alpha _{1}<0 \\ 
2C^{-1}z_{1}\frac{(C^{-1}z_{1}+1)^{\alpha _{1}}}{|v_{n}|^{\beta _{1}}} & 
\text{if }\alpha _{1}\geq 0%
\end{array}%
\right. \\ 
\leq \left\{ 
\begin{array}{ll}
2\left\Vert z_{1}\right\Vert _{\infty }\frac{|u_{n}|^{\alpha _{1}}}{%
|v_{n}|^{\beta _{1}}} & \text{if }\alpha _{1}<0 \\ 
2\left\Vert z_{1}\right\Vert _{\infty }\frac{(\left\Vert z_{1}\right\Vert
_{\infty }+1)^{\alpha _{1}}}{|v_{n}|^{\beta _{1}}} & \text{if }\alpha
_{1}\geq 0,%
\end{array}%
\right.%
\end{array}%
\end{equation*}%
while, by (\ref{27}), we infer that%
\begin{equation}
sgn(v_{n})\frac{(|u_{n}|+\frac{1}{n})^{\alpha _{1}}}{(|v_{n}|+\frac{1}{n}%
)^{\beta _{1}}}(u_{n}-u_{\ast })\in L^{1}(\Omega ).  \label{28}
\end{equation}%
Then, using (\ref{130}), (\ref{28}) and applying Fatou's Lemma, we derive
that%
\begin{equation*}
\underset{n\rightarrow \infty }{\lim }\int_{\Omega }(\nabla u_{n}\text{%
\thinspace }\nabla (u_{n}-u_{\ast })+u_{n}(u_{n}-u_{\ast }))\text{ }\mathrm{d%
}x\leq 0.
\end{equation*}%
Therefore, the $\mathcal{S}_{+}$-property of $-\Delta +I$ on $\mathcal{H}%
_{0}^{1}(\Omega )$ (see, e.g., \cite[Proposition 3.5]{MMPA}) guarantees that 
\begin{equation}
\begin{array}{c}
u_{n}\rightarrow u_{\ast }\text{ in }\mathcal{H}_{0}^{1}(\Omega ).%
\end{array}
\label{13*}
\end{equation}%
In the same manner, we show that%
\begin{equation}
\begin{array}{c}
v_{n}\rightarrow v_{\ast }\text{ in }\mathcal{H}_{0}^{1}(\Omega ).%
\end{array}
\label{13**}
\end{equation}%
On the other hand, by (\ref{alpha}), (\ref{13*}), (\ref{26}) and (\ref{13**}%
), it holds 
\begin{eqnarray*}
|sgn(v_{n})\frac{(|u_{n}|+\frac{1}{n})^{\alpha _{1}}}{(|v_{n}|+\frac{1}{n}%
)^{\beta _{1}}}\varphi _{1}| &\leq &\left\{ 
\begin{array}{ll}
\frac{|u_{n}|^{\alpha _{1}}}{|v_{n}|^{\beta _{1}}}|\varphi _{1}| & \text{if }%
\alpha _{1}<0 \\ 
\frac{(|u_{n}|+1)^{\alpha _{1}}}{|v_{n}|^{\beta _{1}}}|\varphi _{1}| & \text{%
if }\alpha _{1}\geq 0%
\end{array}%
\right. \\
&\leq &\left\{ 
\begin{array}{ll}
\frac{|u_{n}|^{\alpha _{1}}}{|v_{n}|^{\beta _{1}}}|\varphi _{1}| & \text{if }%
\alpha _{1}<0 \\ 
\frac{(\left\Vert z_{1}\right\Vert _{\infty }+1)^{\alpha _{1}}}{%
|v_{n}|^{\beta _{1}}}|\varphi _{1}| & \text{if }\alpha _{1}\geq 0%
\end{array}%
\right.
\end{eqnarray*}%
and%
\begin{eqnarray*}
|sgn(u_{n})\frac{(|u_{n}|+\frac{1}{n})^{\alpha _{2}}}{(|v_{n}|+\frac{1}{n}%
)^{\beta _{2}}}\varphi _{2}| &\leq &\left\{ 
\begin{array}{ll}
\frac{|u_{n}|^{\alpha _{2}}}{|v_{n}|^{\beta _{2}}}|\varphi _{2}| & \text{if }%
\alpha _{2}<0 \\ 
\frac{(|u_{n}|+1)^{\alpha _{2}}}{|v_{n}|^{\beta _{2}}}|\varphi _{2}| & \text{%
if }\alpha _{2}\geq 0%
\end{array}%
\right. \\
&\leq &\left\{ 
\begin{array}{ll}
\frac{|u_{n}|^{\alpha _{2}}}{|v_{n}|^{\beta _{2}}}|\varphi _{2}| & \text{if }%
\alpha _{2}<0 \\ 
\frac{(\left\Vert z_{1}\right\Vert _{\infty }+1)^{\alpha _{2}}}{%
|v_{n}|^{\beta _{2}}}|\varphi _{2}| & \text{if }\alpha _{2}\geq 0%
\end{array}%
\right. ,
\end{eqnarray*}%
for all $\varphi _{1},\varphi _{2}\in \mathcal{H}_{0}^{1}(\Omega ).$ Then,
on the basis of (\ref{27}), Lebesgue's dominated convergence theorem entails 
\begin{eqnarray*}
\lim_{n\rightarrow \infty }\int_{\Omega }sgn(v_{n})\frac{(|u_{n}|+\frac{1}{n}%
)^{\alpha _{1}}}{(|v_{n}|+\frac{1}{n})^{\beta _{1}}}\varphi _{1}\ \mathrm{d}%
x &=&\int_{\Omega }sgn(v_{\ast })\frac{|u_{\ast }|^{\alpha _{1}}}{|v_{\ast
}|^{\beta _{1}}}\varphi _{1}\ \mathrm{d}x, \\
\lim_{n\rightarrow \infty }\int_{\Omega }sgn(u_{n})\frac{(|u_{n}|+\frac{1}{n}%
)^{\alpha _{2}}}{(|v_{n}|+\frac{1}{n})^{\beta _{2}}}\varphi _{2}\ \mathrm{d}%
x &=&\int_{\Omega }sgn(u_{\ast })\frac{|u_{\ast }|^{\alpha _{2}}}{|v_{\ast
}|^{\beta _{2}}}\varphi _{2}\ \mathrm{d}x,
\end{eqnarray*}%
for all $\varphi _{1},\varphi _{2}\in \mathcal{H}_{0}^{1}(\Omega )$\textbf{. 
}Hence, we may pass to the limit in (\ref{122}) to conclude that $(u_{\ast
},v_{\ast })$ is a solution of problem $(\mathrm{P})$ within $%
[-C^{-1}z_{1},C^{-1}z_{1}]\times \lbrack -C^{-1}z_{2},C^{-1}z_{2}]$.

\section{\textbf{Proof of }Theorem \protect\ref{T2}}

\label{S3}

This section is devoted to the proof of the main result Theorem \ref{T2}. It
is performed into two steps, distinguishing the study of constant-sign
solutions from that of nodal solutions.

\begin{remark}
\label{R1}All solutions $(u,v)\in \mathcal{H}_{0}^{1}(\Omega )\times 
\mathcal{H}_{0}^{1}(\Omega )$ of $(\mathrm{P})$ satisfy $u(x),v(x)\neq 0$
for a.e. $x\in \Omega $. This is due to the singular character at the origin
of right hand-side of the equations in $(\mathrm{P})$ together with the fact
that "a.e. in $\Omega $" is an equivalence relation in $L^{1}(\Omega )$.
\end{remark}

\subsection{Constant sign solutions}

\begin{proposition}
\label{T5}Assume (\ref{alpha}) is fulfilled with $\alpha _{i}\leq 0,$ $i=1,2$%
. Then, problem $(\mathrm{P})$ does not admit more than two opposite
constant-sign solutions $(u_{+},v_{+})$ and $(u_{-},v_{-})$ in $(\mathcal{H}%
_{0}^{1}(\Omega )\cap L^{\infty }(\Omega ))^{2}$.
\end{proposition}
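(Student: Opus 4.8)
The plan is to establish uniqueness within each constant-sign class by a monotonicity/comparison argument that exploits the competitive structure of $(\mathrm{P})$ when $\alpha_i\le 0$. First I would treat the positive solutions: suppose $(u_1,v_1)$ and $(u_2,v_2)$ are two positive solutions in $(\mathcal{H}_0^1(\Omega)\cap L^\infty(\Omega))^2$, so both belong to $int\,\mathcal{C}_+^1(\overline\Omega)$-type cones by the bounds in \eqref{21} and elliptic regularity (using $u_i,v_i\ge c\,d(x)$ from below, analogously to Proposition \ref{P2}). Writing the weak formulations with test functions $(u_1-u_2)^+$ and $(v_1-v_2)^+$, I would subtract the two equations for $u$ and the two for $v$ and add them up. On the set $\{u_1>u_2\}$ the map $s\mapsto |s|^{\alpha_1}=s^{\alpha_1}$ is \emph{decreasing} (since $\alpha_1\le 0$), so $u_1^{\alpha_1}-u_2^{\alpha_1}\le 0$ there; meanwhile the denominator $|v|^{\beta_1}$ enters with the sign making the coupling competitive. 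The key algebraic point is that, after the subtraction, the contribution of the singular right-hand sides has the ``wrong'' sign for a nontrivial ordered pair to persist.

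The cleanest route is the classical trick of testing the $u$-equation for $(u_1,v_1)$ with $\varphi=\dfrac{u_1^2-u_2^2}{u_1}$ and the $u$-equation for $(u_2,v_2)$ with $\varphi=\dfrac{u_1^2-u_2^2}{u_2}$ (these are admissible since $u_i\ge c\,d(x)>0$ in $\Omega$ and vanish on $\partial\Omega$), then subtract; the Dirichlet-form part yields $\int_\Omega\big(|\nabla u_1|^2+|\nabla u_2|^2-\nabla u_1\nabla\frac{u_2^2}{u_1}-\nabla u_2\nabla\frac{u_1^2}{u_2}\big)+\int_\Omega(u_1^2-u_2^2-\dots)$, which by the Díaz–Saa inequality is $\ge 0$ and vanishes only if $u_1=u_2$. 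The right-hand side becomes $\int_\Omega sgn(v_1)\frac{u_1^{\alpha_1}}{|v_1|^{\beta_1}}\frac{u_1^2-u_2^2}{u_1}-sgn(v_2)\frac{u_2^{\alpha_2\!}}{|v_2|^{\beta_1}}\frac{u_1^2-u_2^2}{u_2}$ (with $sgn(v_i)\equiv1$), i.e. $\int_\Omega\big(\tfrac{u_1^{\alpha_1-1}}{v_1^{\beta_1}}-\tfrac{u_2^{\alpha_1-1}}{v_2^{\beta_1}}\big)(u_1^2-u_2^2)\,\mathrm dx$; since $\alpha_1-1<0$ the exponent makes $u\mapsto u^{\alpha_1-1}$ decreasing, and doing the symmetric computation with the $v$-equations and adding, the total singular contribution is seen to be $\le 0$ on account of $\beta_i\in(0,1)$ and the competitive coupling, forcing both Díaz–Saa terms to vanish, hence $u_1=u_2$ and $v_1=v_2$. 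For the negative solutions $(u_-,v_-)\in -int\,\mathcal{C}_+^1$, set $\tilde u=-u$, $\tilde v=-v$; since $\alpha_i-\beta_i$ and the sign factors transform so that $(\tilde u,\tilde v)$ solves a system of exactly the same form with positive solutions, the same argument applies verbatim.

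I expect the main obstacle to be the bookkeeping of signs in the mixed term where the $v$-denominator of the $u$-equation couples to the $u$-denominator of the $v$-equation: one must verify that, with $\alpha_i\le 0$, $\beta_i\in(0,1)$ and the constraint \eqref{alpha} ($\alpha_i-\beta_i>-1$, $\alpha_i+\beta_i<1$), the combined right-hand side after the double Díaz–Saa testing genuinely has the sign needed to close the argument — essentially a quasimonotonicity check on the vector field $(s,\sigma)\mapsto(s^{\alpha_1-1}\sigma^{-\beta_1},\,s^{\alpha_2}\sigma^{-\beta_2-1})$ along ordered pairs. A subsidiary technical point is justifying admissibility of the quotient test functions, which rests on the lower bound $u_i,v_i\gtrsim d(x)$ together with $\nabla(u_i^2/u_j)\in L^2$; this follows from Hardy's inequality and the $C^1(\overline\Omega)$ regularity of bounded solutions (as in the a priori estimates already used for $(\mathrm{P}_\varepsilon)$). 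Once these are in place, the Díaz–Saa rigidity statement delivers uniqueness immediately.
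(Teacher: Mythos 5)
Your overall strategy (uniqueness of the positive pair by monotonicity, then reduction of the negative case via $(\tilde u,\tilde v)=(-u,-v)$) matches the paper's in spirit, but the paper's actual proof is far more elementary: it subtracts the two $u$-equations, tests the difference with $u_+-\hat u_+$, and invokes the monotonicity of $s\mapsto s^{\alpha_1}$ for $\alpha_1\le 0$ to assert that the resulting right-hand side is $\le 0$. No D\'iaz--Saa inequality, no quotient test functions $\bigl(u_1^2-u_2^2\bigr)/u_i$, no Hardy inequality, and no lower bound $u\gtrsim d(x)$ are needed there. Your heavier machinery is not wrong in itself, but it does not buy you anything on the one point where both arguments must work.

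That point is precisely the step you defer, and it is a genuine gap, not bookkeeping. After the double D\'iaz--Saa testing, the contribution of the $u$-equations decomposes as
\[
\int_\Omega\frac{u_1^{\alpha_1-1}-u_2^{\alpha_1-1}}{v_1^{\beta_1}}\,(u_1^2-u_2^2)\,\mathrm dx
\;+\;\int_\Omega u_2^{\alpha_1-1}\bigl(v_1^{-\beta_1}-v_2^{-\beta_1}\bigr)(u_1^2-u_2^2)\,\mathrm dx .
\]
The first (diagonal) integral is indeed $\le 0$ since $\alpha_1-1<0$, but the second (cross) integrand carries the sign of $-(v_1-v_2)(u_1-u_2)$. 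The analogous decomposition for the $v$-equations produces the cross term $\int_\Omega v_2^{-\beta_2-1}\bigl(u_1^{\alpha_2}-u_2^{\alpha_2}\bigr)(v_1^2-v_2^2)\,\mathrm dx$, which, because $\alpha_2\le 0$, carries the \emph{same} sign $-(u_1-u_2)(v_1-v_2)$. Hence on the set where $u_1>u_2$ and $v_1<v_2$ both cross terms are nonnegative, they do not cancel, and nothing in (\ref{alpha}) forces them to be dominated by the diagonal terms: the map $(s,\sigma)\mapsto\bigl(s^{\alpha_1-1}\sigma^{-\beta_1},\,s^{\alpha_2}\sigma^{-\beta_2-1}\bigr)$ is not quasimonotone along ordered pairs, and it is a gradient (so that a convexity/D\'iaz--Saa rigidity argument could close the loop) only under extra algebraic relations between the exponents, e.g.\ $\alpha_2=-\beta_1$, which (\ref{alpha}) does not supply. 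So "the combined right-hand side is seen to be $\le 0$" is asserted, not proved, and as stated it is false pointwise; any complete proof must say something specific about the region where $u_1-u_2$ and $v_1-v_2$ have opposite signs. (Be aware that the paper's own one-line estimate also treats only the dependence on the own variable and is silent on this same cross term, so you cannot simply appeal to it.) The subsidiary points you raise (admissibility of the quotient test functions, the lower bound $u_i,v_i\gtrsim d(x)$ for arbitrary bounded positive solutions) are fine and do follow from $\alpha_i\le 0$ and the torsion comparison, but they are not where the difficulty lies.
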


\begin{proof}
We only show the existence of a unique positive solution $(u_{+},v_{+})$ for
problem $(\mathrm{P})$ because the existence of the negative solution $%
(u_{1,-},u_{2,-})$ can be justified similarly. By contradiction, let $%
(u_{+},v_{+})$ and $(\hat{u}_{+},\hat{v}_{+})$ be two distinct positive
solutions of $(\mathrm{P})$ in $(\mathcal{H}_{0}^{1}(\Omega )\cap L^{\infty
}(\Omega ))^{2}$ satisfying%
\begin{equation*}
\left\{ 
\begin{array}{ll}
-\Delta u_{+}+u_{+}=\frac{|u_{+}|^{\alpha _{1}}}{|v_{+}|^{\beta _{1}}} & 
\text{in }\Omega \\ 
-\Delta v_{+}+v_{+}=\frac{|u_{+}|^{\alpha _{2}}}{|v_{+}|^{\beta _{2}}} & 
\text{in }\Omega \\ 
u_{+},v_{+}=0 & \text{on }\partial \Omega ,%
\end{array}%
\right.
\end{equation*}%
\begin{equation*}
\left\{ 
\begin{array}{ll}
-\Delta \hat{u}_{+}+\hat{u}_{+}=\frac{|\hat{u}_{+}|^{\alpha _{1}}}{|\hat{v}%
_{+}|^{\beta _{1}}} & \text{in }\Omega \\ 
-\Delta \hat{v}_{+}+\hat{v}_{+}=\frac{|\hat{u}_{+}|^{\alpha _{2}}}{|\hat{v}%
_{+}|^{\beta _{2}}} & \text{in }\Omega \\ 
\hat{u}_{+},\hat{v}_{+}=0 & \text{on }\partial \Omega ,%
\end{array}%
\right.
\end{equation*}%
where $sgn(u_{+}),sgn(v_{+}),sgn(\hat{u}_{+}),sgn(\hat{u}_{+})\equiv 1$. Thus%
\begin{equation}
\left\{ 
\begin{array}{l}
-\Delta u_{+}+\Delta \hat{u}_{+}+u_{+}-\hat{u}_{+}=\frac{|u_{+}|^{\alpha
_{1}}}{|v_{+}|^{\beta _{1}}}-\frac{|\hat{u}_{+}|^{\alpha _{1}}}{|\hat{v}%
_{+}|^{\beta _{1}}}\text{ in }\Omega \\ 
u_{+}-\hat{u}_{+}=0\text{ \ on }\partial \Omega .%
\end{array}%
\right.  \label{17*}
\end{equation}%
Multiply (\ref{17*}) by $(u_{+}-\hat{u}_{+})$ and integrate over $\Omega ,$
the assumption $\alpha _{i}\leq 0$ yields%
\begin{equation*}
\begin{array}{l}
0\leq \int_{\Omega }|\nabla (u_{+}-\hat{u}_{+})|^{2}\ dx+\int_{\Omega
}|u_{+}-\hat{u}_{+}|^{2}\ dx \\ 
=\int_{\Omega }\left( \frac{|u_{+}|^{\alpha _{1}}}{|v_{+}|^{\beta _{1}}}-%
\frac{|\hat{u}_{+}|^{\alpha _{1}}}{|\hat{v}_{+}|^{\beta _{1}}}\right) (u_{+}-%
\hat{u}_{+})\ dx\leq 0,%
\end{array}%
\end{equation*}%
showing that $u_{+}=\hat{u}_{+}$ in $\Omega $. A similar argument produces $%
v_{+}=\hat{v}_{+}.$
\end{proof}

\begin{theorem}
\label{T1}Under assumption (\ref{alpha}), problem $(\mathrm{P})$ admits at
least two opposite constant-sign solutions $(u_{+},v_{+})$ and $%
(u_{-},v_{-}) $ in $\mathcal{C}^{1,\tau }(\overline{\Omega })\times \mathcal{%
C}^{1,\tau }(\overline{\Omega }),$ for certain $\tau \in (0,1).$ Moreover,
for a constant $C>0$ large, it holds%
\begin{equation}
C^{-1}z_{1}(x)\ll u_{+}(x)\ll Cy_{1}(x),\text{ }\forall x\in \Omega ,
\label{p1}
\end{equation}%
\begin{equation}
C^{-1}z_{2}(x)\ll v_{+}(x)\ll Cy_{2}(x),\text{ }\forall x\in \Omega ,
\end{equation}%
\begin{equation}
-Cy_{1}(x)\ll u_{-}(x)\ll -C^{-1}z_{1}(x),\text{ }\forall x\in \Omega
\end{equation}%
and%
\begin{equation}
-Cy_{2}(x)\ll v_{-}(x)\ll -C^{-1}z_{2}(x),\text{ }\forall x\in \Omega .
\label{p2}
\end{equation}%
If $\alpha _{i}\leq 0,i=1,2,$ then the constant-sign solutions $%
(u_{+},v_{+}) $ and $(u_{-},v_{-})$ are unique.
\end{theorem}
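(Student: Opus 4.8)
The plan is to obtain the positive pair $(u_{+},v_{+})$ from the sub-supersolution theorem for singular systems \cite[Theorem 2]{KM}, the negative pair from the oddness of $(\mathrm{P})$, the strict enclosures \eqref{p1}--\eqref{p2} from the strong comparison principle, and the uniqueness assertion directly from Proposition \ref{T5}.

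\emph{Building a sub-supersolution pair.} On positive functions $(\mathrm{P})$ reads $-\Delta u+u=u^{\alpha_{1}}v^{-\beta_{1}}$, $-\Delta v+v=u^{\alpha_{2}}v^{-\beta_{2}}$. I would take $(\underline{u},\underline{v})=(C^{-1}z_{1},C^{-1}z_{2})$ and $(\overline{u},\overline{v})=(Cy_{1},Cy_{2})$, with $y_{i},z_{i}$ as in \eqref{20}--\eqref{22} and $C>1$ large; the ordering $\underline{u}\le\overline{u}$, $\underline{v}\le\overline{v}$ is immediate from \eqref{21}. To check the sub-supersolution inequalities in the sense of \cite{KM}, one fixes one component at its sub- (resp.\ super-) value and lets the other vary over its interval; since $\beta_{i}>0$ and $\alpha_{i}$ has fixed sign, each right-hand side is monotone in each argument, so the relevant extremum is attained at a corner of the box. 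Using \eqref{20}, \eqref{22}, \eqref{21} and bounding $d(x)$ between $\delta$ and $\mathrm{diam}\,\Omega$ on $\Omega\setminus\overline{\Omega}_{\delta}$, the subsolution inequalities collapse to $C^{\,s_{i}-1}\le\kappa_{i}$ with $s_{i}:=\max\{\alpha_{i}+\beta_{i},\,\beta_{i}-\alpha_{i}\}<1$ by \eqref{alpha}, and the supersolution inequalities to $C^{\,\rho_{i}}\ge\kappa_{i}'$ with $\rho_{i}>0$; all hold once $C$ is large. On the layer $\Omega_{\delta}$, where $-\Delta z_{i}+z_{i}=-1$, the subsolution inequality for $\underline{u}$ is trivial (its left-hand side is $-C^{-1}<0$ while the right-hand side is positive) and the supersolution inequality for $\overline{u}$ holds because $C\,d(x)^{\alpha_{i}-\beta_{i}}$ is large near $\partial\Omega$ while the right-hand side stays controlled by \eqref{21} and $C\ge1$.

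\emph{Existence, regularity, strict enclosure.} By \cite[Theorem 2]{KM} there is a solution $(u_{+},v_{+})$ of $(\mathrm{P})$ with $C^{-1}z_{1}\le u_{+}\le Cy_{1}$ and $C^{-1}z_{2}\le v_{+}\le Cy_{2}$; in particular $u_{+},v_{+}>0$ in $\Omega$, so $(u_{+},v_{+})$ is nontrivial. Its right-hand sides then behave like $d(x)^{\alpha_{i}-\beta_{i}}$, so the boundary regularity of \cite{L} places it in $\mathcal{C}^{1,\tau}(\overline{\Omega})\times\mathcal{C}^{1,\tau}(\overline{\Omega})$ for some $\tau\in(0,1)$. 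Redoing the computations above with a strict gap $\sigma=\sigma(\mathrm{K})>0$ on each compact $\mathrm{K}\subset\subset\Omega$, exactly as in the proof of Proposition \ref{P2}, one obtains $-\Delta(C^{-1}z_{1})+C^{-1}z_{1}<u_{+}^{\alpha_{1}}v_{+}^{-\beta_{1}}$ and $-\Delta(Cy_{1})+Cy_{1}>u_{+}^{\alpha_{1}}v_{+}^{-\beta_{1}}$ a.e.\ on $\mathrm{K}$ (enlarging $C$ if needed), and similarly for the second equation; the strong comparison principle \cite[Proposition 2.6]{AR}, with \eqref{21}, upgrades the enclosures to $C^{-1}z_{1}\ll u_{+}\ll Cy_{1}$ and $C^{-1}z_{2}\ll v_{+}\ll Cy_{2}$, i.e.\ \eqref{p1}--\eqref{p2}.

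\emph{Negative solution and uniqueness.} Since $-\Delta(-u)+(-u)=-(-\Delta u+u)$, the sign functions flip, and $|u|^{\alpha_{i}}/|v|^{\beta_{i}}$ is unchanged under $(u,v)\mapsto(-u,-v)$, this map sends solutions of $(\mathrm{P})$ to solutions; hence $(u_{-},v_{-}):=(-u_{+},-v_{+})\in\mathcal{C}^{1,\tau}(\overline{\Omega})^{2}$ is a negative solution satisfying the estimates mirroring \eqref{p1}--\eqref{p2} (alternatively one reruns the construction with the pair $(-Cy_{i},-C^{-1}z_{i})$). If $\alpha_{i}\le0$, then $(u_{\pm},v_{\pm})\in\mathcal{C}^{1,\tau}(\overline{\Omega})^{2}\subset(\mathcal{H}_{0}^{1}(\Omega)\cap L^{\infty}(\Omega))^{2}$, so Proposition \ref{T5} forces them to be the unique positive and negative solutions. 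The one delicate point is the sub-supersolution verification: beyond the bookkeeping of the powers of $C$, one must ensure that the boundary layer $\Omega_{\delta}$ — where $z_{i}$ is built with the right-hand side $-1$ — does not destroy the subsolution inequality for $u_{+}$; this is exactly what the auxiliary functions $z_{i}$ are designed for, and everything else is a routine application of the quoted singular sub-supersolution theorem and the strong comparison principle.
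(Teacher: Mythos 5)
Your proposal is correct and follows essentially the same route as the paper: the same sub-supersolution pair $(C^{-1}z_{i},Cy_{i})$ fed into \cite[Theorem 2]{KM}, the same strict-gap strong comparison argument on compacts via \cite[Proposition 2.6]{AR} for the enclosures \eqref{p1}--\eqref{p2}, and uniqueness from Proposition \ref{T5}. The only (harmless) deviation is that you produce $(u_{-},v_{-})$ from the odd symmetry $(u,v)\mapsto(-u,-v)$ of $(\mathrm{P})$ instead of rerunning the construction with the negative ordered pair $(-Cy_{i},-C^{-1}z_{i})$ as the paper does; both are valid.
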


\begin{proof}
By using sub-supersolutions method, we first prove the existence of a
positive solution $(u_{+},v_{+})$ within $[C^{-1}z_{1},Cy_{1}]\times \lbrack
C^{-1}z_{2},Cy_{2}]$. Given that $z_{i},y_{i}\geq 0$ in $\overline{\Omega }$%
, we have $sgn(u_{+}),sgn(v_{+})\equiv 1$ in $[C^{-1}z_{1},Cy_{1}]$ and $%
[C^{-1}z_{2},Cy_{2}],$ respectively. Obviousely, we have%
\begin{equation*}
-C^{-1}<0\leq \min \{\frac{(C^{-1}z_{1})^{\alpha _{i}}}{(Cy_{2})^{\beta _{i}}%
},\frac{(Cy_{1})^{\alpha _{i}}}{(Cy_{2})^{\beta _{i}}}\},\ \text{for all }%
x\in \Omega _{\delta }.
\end{equation*}%
Moreover, by (\ref{alpha}), (\ref{21}) and for $C>1$, for $\alpha _{i}>0,$
we obtain%
\begin{eqnarray*}
C^{-1}d(x)^{\alpha _{i}-\beta _{i}} &\leq &(Cc)^{-(\alpha _{i}+\beta
_{i})}d(x)^{\alpha _{i}-\beta _{i}} \\
&\leq &\frac{(C^{-1}c^{-1}d(x))^{\alpha _{i}}}{(Ccd(x))^{\beta _{i}}}\leq 
\frac{(C^{-1}z_{1})^{\alpha _{i}}}{(Cy_{2})^{\beta _{i}}},\ \ \text{for all }%
x\in \Omega \backslash \overline{\Omega }_{\delta },
\end{eqnarray*}%
\begin{eqnarray*}
Cd(x)^{\alpha _{i}-\beta _{i}} &\geq &c^{\alpha _{i}+\beta
_{i}}(Cd(x))^{\alpha _{i}-\beta _{i}} \\
&\geq &\frac{(Ccd(x))^{\alpha _{i}}}{(Cc^{-1}d(x))^{\beta _{i}}}\geq \frac{%
(Cy_{1})^{\alpha _{i}}}{(Cy_{2})^{\beta _{i}}},\text{ \ for all }x\in 
\overline{\Omega },
\end{eqnarray*}%
while, if $\alpha _{i}\leq 0,$ it holds%
\begin{equation*}
C^{-1}d(x)^{\alpha _{i}-\beta _{i}}\leq (Ccd(x))^{\alpha _{i}-\beta
_{i}}\leq \frac{(Cy_{1})^{\alpha _{i}}}{(Cy_{2})^{\beta _{i}}},\ \ \text{for
all }x\in \Omega \backslash \overline{\Omega }_{\delta },
\end{equation*}%
\begin{equation*}
Cd(x)^{\alpha _{i}-\beta _{i}}\geq (C^{-1}c^{-1}d(x))^{\alpha _{i}-\beta
_{i}}\geq \frac{(C^{-1}z_{1})^{\alpha _{i}}}{(C^{-1}z_{2})^{\beta _{i}}},%
\text{ \ for all }x\in \overline{\Omega },
\end{equation*}%
provided that $C>1$ is sufficiently large. Then, in view of (\ref{20}) and (%
\ref{22}), it follows that%
\begin{equation*}
\begin{array}{l}
-\Delta (C^{-1}z_{i})+C^{-1}z_{i}=C^{-1}\left\{ 
\begin{array}{ll}
d(x)^{\alpha _{i}-\beta _{i}} & \text{in \ }\Omega \backslash \overline{%
\Omega }_{\delta }, \\ 
-1 & \text{in \ }\Omega _{\delta },%
\end{array}%
\right. \\ 
\\ 
\leq \left\{ 
\begin{array}{cc}
\frac{(C^{-1}z_{1})^{\alpha _{i}}}{(Cy_{2})^{\beta _{i}}} & \text{if }\alpha
_{i}\geq 0 \\ 
\frac{(Cy_{1})^{\alpha _{i}}}{(Cy_{2})^{\beta _{i}}} & \text{if }\alpha
_{i}\leq 0%
\end{array}%
\right. \leq \frac{|u|^{\alpha _{i}}}{|v|^{\beta _{i}}}\text{ in }\Omega ,%
\end{array}%
\end{equation*}%
and 
\begin{equation*}
\begin{array}{l}
-\Delta (Cy_{i})+Cy_{i}=Cd(x)^{\alpha _{i}-\beta _{i}} \\ 
\\ 
\geq \left\{ 
\begin{array}{cc}
\frac{(Cy_{1})^{\alpha _{i}}}{(Cy_{2})^{\beta _{i}}} & \text{if }\alpha
_{i}\geq 0 \\ 
\frac{(C^{-1}z_{1})^{\alpha _{i}}}{(C^{-1}z_{2})^{\beta _{i}}} & \text{if }%
\alpha _{i}\leq 0%
\end{array}%
\right. \geq \frac{|u|^{\alpha _{i}}}{|v|^{\beta _{i}}}\text{ in }\Omega ,%
\end{array}%
\end{equation*}%
for all $(u_{1},u_{2})\in \lbrack C^{-1}z_{1},Cy_{1}]\times \lbrack
C^{-1}z_{2},Cy_{2}].$ Then, \cite[Theorem 2]{KM} ensures the existence a
solution $(u_{+},v_{+})\in \mathcal{C}^{1,\tau }(\overline{\Omega })\times 
\mathcal{C}^{1,\tau }(\overline{\Omega }),$ for certain $\tau \in (0,1),$
for problem $(\mathrm{P})$ within $[C^{-1}z_{1},Cy_{1}]\times \lbrack
C^{-1}z_{2},Cy_{2}]$. In view of Proposition \ref{T5}, $(u_{+},v_{+})$ is a
unique positive solution of $(\mathrm{P})$.

On the other hand, by (\ref{21}) and (\ref{alpha}), for each compact set $%
\mathrm{K}\subset \Omega $, there is a constant $\eta =\eta (\mathrm{K})>0$
such that 
\begin{equation*}
\begin{array}{l}
\eta +\underline{\mathrm{X}}_{i}(x):=\eta +C^{-1}\left\{ 
\begin{array}{ll}
d(x)^{\alpha _{i}-\beta _{i}} & \text{in \ }\Omega \backslash \overline{%
\Omega }_{\delta }, \\ 
-1 & \text{in \ }\Omega _{\delta },%
\end{array}%
\right. \\ 
\\ 
<\min \{(Cc)^{-(\alpha _{i}+\beta _{i})},(Cc)^{\alpha _{i}-\beta _{i}}\}%
\text{ }d(x)^{\alpha _{i}-\beta _{i}} \\ 
\\ 
\leq \left\{ 
\begin{array}{cc}
\frac{(C^{-1}c^{-1}d(x))^{\alpha _{i}}}{(Ccd(x))^{\beta _{i}}} & \text{if }%
\alpha _{i}\geq 0 \\ 
\frac{(Ccd(x))^{\alpha _{i}}}{(Ccd(x))^{\beta _{i}}} & \text{if }\alpha
_{i}\leq 0%
\end{array}%
\right. \leq \left\{ 
\begin{array}{cc}
\frac{(C^{-1}z_{1})^{\alpha _{i}}}{(Cy_{2})^{\beta _{i}}} & \text{if }\alpha
_{i}\geq 0 \\ 
\frac{(Cy_{1})^{\alpha _{i}}}{(Cy_{2})^{\beta _{i}}} & \text{if }\alpha
_{i}\leq 0%
\end{array}%
\right. \\ 
\\ 
\leq \frac{u_{+}^{\alpha _{i}}}{v_{+}^{\beta _{i}}}:=\mathrm{X}_{i}(x)\text{
a.e. in }\Omega \cap \mathrm{K}%
\end{array}%
\end{equation*}%
and%
\begin{equation*}
\begin{array}{l}
\overline{\mathrm{X}}_{i}(x):=Cd(x)^{\alpha _{i}-\beta _{i}}>\eta +\max
\{(Cc)^{\alpha _{i}+\beta _{i}},(C^{-1}c^{-1})^{\alpha _{i}-\beta _{i}}\}%
\text{ }d(x)^{\alpha _{i}-\beta _{i}} \\ 
\\ 
\geq \eta +\left\{ 
\begin{array}{cc}
\frac{(Ccd(x))^{\alpha _{i}}}{(C^{-1}c^{-1}d(x))^{\beta _{i}}} & \text{if }%
\alpha _{i}\geq 0 \\ 
\frac{(C^{-1}z_{1})^{\alpha _{i}}}{(C^{-1}z_{2})^{\beta _{i}}} & \text{if }%
\alpha _{i}\leq 0%
\end{array}%
\right. \geq \eta +\left\{ 
\begin{array}{cc}
\frac{(Cy_{1})^{\alpha _{i}}}{(C^{-1}z_{2})^{\beta _{i}}} & \text{if }\alpha
_{i}\geq 0 \\ 
\frac{(C^{-1}z_{1})^{\alpha _{i}}}{(C^{-1}z_{2})^{\beta _{i}}} & \text{if }%
\alpha _{i}\leq 0%
\end{array}%
\right. \\ 
\\ 
\geq \eta +\frac{u_{+}^{\alpha _{i}}}{v_{+}^{\beta _{i}}}:=\eta +\mathrm{X}%
_{i}(x)\text{ a.e. in }\Omega \cap \mathrm{K},%
\end{array}%
\end{equation*}%
with $\underline{\mathrm{X}}_{i},\mathrm{X}_{i}\mathrm{,}\overline{\mathrm{X}%
}_{i}\in L_{loc}^{\infty }(\Omega )$. By the strong comparison principle 
\cite[Proposition $2.6$]{AR}, we infer that (\ref{p1}) holds true.

Following a quite similar argument as above we obtain the existence a
solution $(u_{-},v_{-})\in \mathcal{C}^{1,\tau }(\overline{\Omega })\times 
\mathcal{C}^{1,\tau }(\overline{\Omega }),$ for certain $\tau \in (0,1),$
for problem $(\mathrm{P})$ satisfying (\ref{p2}), which actually is the
unique negative solution of $(\mathrm{P})$.
\end{proof}

\subsection{Nodal solutions}

\begin{theorem}
Under assumption (\ref{alpha}), problem $(\mathrm{P})$ admits at least a
solution $(u_{\ast },v_{\ast })$ in $\mathcal{H}_{0}^{1}(\Omega )\times 
\mathcal{H}_{0}^{1}(\Omega )$ within $[-C^{-1}z_{1},C^{-1}z_{1}]\times
\lbrack -C^{-1}z_{2},C^{-1}z_{2}].$ Moreover, if $\alpha _{i}\leq 0$, then $%
(u_{\ast },v_{\ast })$ is nodal with synchronous sign components.
\end{theorem}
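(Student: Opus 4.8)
The plan is to combine the results already established for the regularized system $(\mathrm{P}_\varepsilon)$ with the uniqueness information from the constant-sign analysis. Theorem \ref{T4} directly yields the first assertion: there is a solution $(u_\ast,v_\ast)\in \mathcal{H}_0^1(\Omega)\times \mathcal{H}_0^1(\Omega)$ of $(\mathrm{P})$ obtained as the $\mathcal{H}_0^1$-limit of solutions $(u_\varepsilon,v_\varepsilon)$ of $(\mathrm{P}_\varepsilon)$, and this solution lies in the box $[-C^{-1}z_1,C^{-1}z_1]\times[-C^{-1}z_2,C^{-1}z_2]$ by the passage to the limit. Moreover, by Remark \ref{R1} (the singular character at the origin), both $u_\ast$ and $v_\ast$ are nonzero a.e.\ in $\Omega$, hence $(u_\ast,v_\ast)$ is nontrivial. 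So the only real content is the nodal + synchronous-sign claim when $\alpha_i\le 0$.

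Next I would argue that $(u_\ast,v_\ast)$ cannot be of constant sign. By Theorem \ref{T1}, when $\alpha_i\le 0$ the unique positive solution $(u_+,v_+)$ satisfies $C^{-1}z_1\ll u_+$ and $C^{-1}z_2\ll v_+$, and the unique negative solution satisfies $u_-\ll -C^{-1}z_1$, $v_-\ll -C^{-1}z_2$; in particular neither lies in the box $[-C^{-1}z_1,C^{-1}z_1]\times[-C^{-1}z_2,C^{-1}z_2]$ (for $C>1$ large, the strict inequalities $\ll$ near the boundary force them strictly outside). Since $(u_\ast,v_\ast)$ does lie in that box, it coincides with neither $(u_+,v_+)$ nor $(u_-,v_-)$, and by the uniqueness these are the only constant-sign solutions; therefore $(u_\ast,v_\ast)$ is not of constant sign. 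Combined with $u_\ast,v_\ast\neq 0$ a.e., this means at least one component is sign-changing, i.e.\ $(u_\ast,v_\ast)$ is nodal in the sense of the paper.

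It then remains to upgrade ``not constant sign'' to the \emph{synchronous} sign-changing property $u_\ast v_\ast>0$ a.e.\ in $\Omega$. Here I would exploit the sign-coupled structure of $(\mathrm{P})$: write $\Omega^+ = \{x: u_\ast(x)>0\}$ and $\Omega^- = \{x: u_\ast(x)<0\}$ (up to null sets $\Omega = \Omega^+\cup\Omega^-$). On $\Omega^+$, the second equation reads $-\Delta v_\ast + v_\ast = |u_\ast|^{\alpha_2}/|v_\ast|^{\beta_2}>0$, which via the equation for $v_\ast$ and a comparison/maximum-principle argument (testing with $v_\ast^-$, or using that the right-hand side is a nonnegative singular term against the torsion-type estimate $z_2$) forces $v_\ast>0$ there; symmetrically, on $\Omega^-$ the sign is $sgn(u_\ast)=-1$ so $-\Delta v_\ast+v_\ast<0$ there, forcing $v_\ast<0$. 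The same role-reversed argument with the first equation and $sgn(v_\ast)$ shows $u_\ast$ has the sign of $v_\ast$. Iterating these two sign-propagation statements (they are mutually reinforcing) shows $u_\ast$ and $v_\ast$ share the same sign a.e., i.e.\ $u_\ast v_\ast>0$ a.e.; and since $u_\ast$ changes sign (it is not constant sign), so does $v_\ast$, giving synchronous sign-changing components.

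The main obstacle I anticipate is making the sign-propagation step rigorous in the singular setting: one cannot freely test the equation for $v_\ast$ with $v_\ast^-$ restricted to $\Omega^+$ because the right-hand side $|u_\ast|^{\alpha_2}/|v_\ast|^{\beta_2}$ blows up where $v_\ast=0$, and a priori the zero set of $v_\ast$ need not be negligible inside $\Omega^+$. The remedy is to go back to the approximating solutions: on $\Omega^+$ one has $sgn(u_n)\to 1$ and the regularized right-hand side $(|u_n|+\tfrac1n)^{\alpha_2}/(|v_n|+\tfrac1n)^{\beta_2}$ is bounded below by a positive constant on compact subsets (using the lower bound already present in Proposition \ref{P2}-type estimates and the boundedness from Theorem \ref{T4}), so $v_n>0$ there by the strong maximum principle, and this sign is preserved in the limit; symmetrically on $\Omega^-$. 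This is where the regularization genuinely pays off, since it is exactly what lets the zero-measure vanishing set property and the maximum principle coexist.
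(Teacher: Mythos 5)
Your first two steps coincide with the paper's own argument: existence and the box localization come from Theorem \ref{T4}, nontriviality of both components from Remark \ref{R1}, and the fact that $(u_{\ast},v_{\ast})$ is a third solution which cannot have both components of the same constant sign follows from (\ref{p1})--(\ref{p2}) together with the uniqueness statement of Theorem \ref{T1}. The divergence --- and the problem --- is in the synchronous-sign step.

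Your sign-propagation argument ``on $\Omega^{+}=\{u_{\ast}>0\}$ the second equation forces $v_{\ast}>0$'' has a genuine gap, and the remedy you propose does not close it. First, $\Omega^{+}$ is only a measurable set, so ``the equation on $\Omega^{+}$'' has no meaning beyond the global weak formulation, and $v_{\ast}^{-}\mathbbm{1}_{\Omega^{+}}$ is in general not an $\mathcal{H}_{0}^{1}(\Omega)$ function, so it cannot be used as a test function; testing globally with $-v_{\ast}^{-}$ gives $\Vert v_{\ast}^{-}\Vert_{1,2}^{2}=-\int_{\{v_{\ast}<0\}}sgn(u_{\ast})\frac{|u_{\ast}|^{\alpha_{2}}}{|v_{\ast}|^{\beta_{2}}}v_{\ast}^{-}\,\mathrm{d}x$, whose integrand changes sign with $u_{\ast}$, so no conclusion follows. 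Second, and more seriously, the maximum-principle step for the approximations is false as stated: from $-\Delta v_{n}+v_{n}\geq c>0$ on an open set $U\subset\{u_{\ast}>0\}$ you cannot conclude $v_{n}>0$ on $U$, because the minimum of $v_{n}$ over $\overline{U}$ may be attained on $\partial U$, where the only information available is $-C^{-1}z_{2}\leq v_{n}\leq C^{-1}z_{2}$, which permits $v_{n}<0$ there (compare $-\Delta w+w=1$ in $U$ with $w=-1$ on $\partial U$). Moreover, the lower bounds of Proposition \ref{P2} that you invoke are established only for positive (resp.\ negative) solutions of $(\mathrm{P}_{\varepsilon,0}^{t})$ and do not apply to the solutions $(u_{n},v_{n})\in\mathcal{B}_{z}$.

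The paper's route avoids any boundary-value input: it eliminates, case by case, every configuration in which one component keeps a constant sign, by testing the equation of the \emph{other} component with (a localized version of) its negative part. For instance, if $v_{\ast}>0$ a.e.\ then $sgn(v_{\ast})\equiv 1$, and testing the first equation with $-u_{\ast}^{-}$ yields $\Vert u_{\ast}^{-}\Vert_{1,2}^{2}=-\int_{\Omega}\frac{|u_{\ast}|^{\alpha_{1}}}{|v_{\ast}|^{\beta_{1}}}u_{\ast}^{-}\,\mathrm{d}x\leq 0$, hence $u_{\ast}^{-}=0$; this single energy identity disposes simultaneously of the ``opposite constant signs'' case and of the ``$u_{\ast}$ changes sign while $v_{\ast}>0$'' case, because once one component has a fixed sign the right-hand side has a definite sign against the negative part of the other. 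If you wish to keep your decomposition $\Omega=\Omega^{+}\cup\Omega^{-}$, you must replace the maximum principle on $\Omega^{\pm}$ by an energy argument of this type applied to the mixed-sign sets such as $\{u_{\ast}>0,\,v_{\ast}<0\}$.
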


\begin{proof}
According to Theorem \ref{T4} and Remark \ref{R1}, system $(\mathrm{P})$
admits a nontrivial solution $(u_{\ast },v_{\ast })\in \mathcal{H}%
_{0}^{1}(\Omega )\times \mathcal{H}_{0}^{1}(\Omega )$ located in $%
[-C^{-1}z_{1},C^{-1}z_{1}]\times \lbrack -C^{-1}z_{2},C^{-1}z_{2}]$. In view
of (\ref{p1})-(\ref{p2}), we infer that $(u_{\ast },v_{\ast })$ is a third
solution of $(\mathrm{P})$ while Theorem \ref{T1} together with Remark \ref%
{R1} force that $(u_{\ast },v_{\ast })$ is nodal in the sens that the
components $u_{\ast }$ and $v_{\ast }$ are nontrivial and at least are not
of the same constant sign.

Assume that $u_{\ast }<0<v_{\ast }$. Test the first equation in $(\mathrm{P}%
) $ by $-u_{\ast }^{-}$ we get%
\begin{eqnarray*}
\int_{\Omega }(|\nabla u_{\ast }^{-}|^{2}+|u_{\ast }^{-}|^{2})\text{ }%
\mathrm{d}x &=&-\int_{\Omega }sgn(v_{\ast })\frac{|u_{\ast }|^{\alpha _{1}}}{%
|v_{\ast }|^{\beta _{1}}}u_{\ast }^{-}\text{ }\mathrm{d}x \\
&=&-\int_{\Omega }\frac{|u_{\ast }|^{\alpha _{1}}}{|v_{\ast }|^{\beta _{1}}}%
u_{\ast }^{-}\text{ }\mathrm{d}x<0,
\end{eqnarray*}%
which forces $u_{\ast }^{-}=0$, a contradiction. The same conclusion can be
drawn if we assume $v_{\ast }<0<u_{\ast }$. Hence, $u_{\ast }$ and $v_{\ast
} $ cannot be of opposite constant sign, therefore, at least $u_{\ast }$ or $%
v_{\ast }$ change sign.

Assume that $u_{\ast }$ change sign and $v_{\ast }>0$. Let $\Omega _{\ast
}\subset \Omega $ be a subset such that $u_{\ast }<0$ in $\Omega _{\ast }$.
In view of \cite[Proposition 1.61]{MMPA}, $u_{\ast }^{-}\mathbbm{1}_{\Omega
_{\ast }}\in \mathcal{H}_{0}^{1}(\Omega )$, where $\mathbbm{1}_{\Omega
_{\ast }}$ denotes the characteristic function of $\Omega _{\ast }$. Test
the first equation in $(\mathrm{P})$ by $-u_{\ast }^{-}\mathbbm{1}_{\Omega
_{\ast }}\in \mathcal{H}_{0}^{1}(\Omega )$ we get%
\begin{eqnarray*}
\int_{\Omega _{\ast }}(|\nabla u_{\ast }^{-}|^{2}+|u_{\ast }^{-}|^{2})\text{ 
}\mathrm{d}x &=&-\int_{\Omega _{\ast }}sgn(v_{\ast })\frac{|u_{\ast
}|^{\alpha _{1}}}{|v_{\ast }|^{\beta _{1}}}u_{\ast }^{-}\text{ }\mathrm{d}x
\\
&=&-\int_{\Omega _{\ast }}\frac{|u_{\ast }|^{\alpha _{1}}}{|v_{\ast
}|^{\beta _{1}}}\hat{u}_{1}^{-}\text{ }\mathrm{d}x\leq 0,
\end{eqnarray*}%
which forces $u_{\ast }^{-}\mathbbm{1}_{\Omega _{\ast }}=0$, a
contradiction. The same conclusion can be drawn if we assume $u_{\ast }$
change sign and $v_{\ast }<0$ or $v_{\ast }$ change sign and $u_{\ast }>0$
or $u_{\ast }<0$. Hence, $u^{\ast }$ and $v^{\ast }$ are necessarily
synchronous sign changing satisfying (\ref{40}). This completes the proof.
\end{proof}

\end{document}